\newtheorem{theorem}{Theorem}[section]
\newtheorem{definition}[theorem]{Definition}
\newtheorem{lemma}[theorem]{Lemma}
\newtheorem{proposition}[theorem]{Proposition}
\newtheorem{corollary}[theorem]{Corollary}
\theoremstyle{definition}
\newtheorem{remark}[theorem]{Remark}
\newcommand{\Sp}{\mathrm{Sp}(V)}
\newcommand{\x}{{\bf x}}
\newcommand{\y}{{\bf y}}
\newcommand{\U}{{\bf U}}
\newcommand{\FF}{\mathcal{F}}
\newcommand{\CC}{\mathbb{C}}
\newcommand{\Q}{\mathbb{Q}}
\newcommand{\ZZ}{\mathbb{Z}}
\newcommand{\mf}{m_{\mathsf{faithful},K}}
\newcommand{\mff}{m_{\mathsf{faithful}}}
\newcommand{\aideal}{\mathfrak{a}}
\newcommand{\1}{\mathbf{1}}
\newcommand{\FZ}{\mathbb{Z}/p\mathbb{Z}}
\newcommand{\rank}{d'}
\newcommand{\Z}[1]{\mathbb{Z}/#1\mathbb{Z}}
\newcommand{\OO}[1]{\mathcal{O}/\mathfrak{p}^{#1}}
\newcommand{\F}[1]{\mathbb{F}_{#1}}
\newcommand{\restr}[2]{#1_{|_{#2}}}
\DeclareMathOperator{\Hom}{Hom}
\DeclareMathOperator{\GL}{GL}
\DeclareMathOperator{\Aff}{Aff}
\DeclareMathOperator{\Hei}{Heis}
\DeclareMathOperator{\Ind}{Ind}
\DeclareMathOperator{\Ann}{Ann}
\title{Minimal dimension of faithful representations for $p$-groups}
\author[M. Bardestani]{Mohammad Bardestani}
\address{Mohammad Bardestani, Department of Mathematics and Statistics, University of Ottawa, 585 King Edward, Ottawa, ON K1N
6N5, Canada.}
\email{mbardest@uottawa.ca}
\author[K. Mallahi-Karai]{Keivan Mallahi-Karai}
\address{Keivan Mallahi-Karai, Jacobs University Bremen, Campus Ring I, 28759 Bremen, Germany.}
\email{k.mallahikarai@jacobs-university.de }
\author[H. Salmasian]{Hadi Salmasian}
\address{Hadi Salmasian, Department of Mathematics and Statistics, University of Ottawa, 585 King Edward, Ottawa, ON K1N
6N5, Canada.}
\email{hadi.salmasian@uottawa.ca}
\begin{document}
%=================================== 
\maketitle
%\tableofcontents
%============Abstract====================
\begin{abstract}  For a group $G$, we denote by $\mff(G)$, the smallest dimension of a faithful complex representation of $G$. Let $F$ be a non-Archimedean local field with the ring of integers $\mathcal{O}$ and the maximal ideal $\mathfrak{p}$.
  In this paper, we  compute the precise value of $\mff(G)$ when $G$ is the Heisenberg group over $\OO{n}$. We then use the Weil representation to compute the minimal dimension of faithful representations of the group of unitriangular  matrices over $\OO{n}$ and many of its subgroups. By a theorem of Karpenko and Merkurjev~\cite[Theorem 4.1]{Merkurje}, our result yields the precise value of the essential dimension of the latter finite groups.
 \end{abstract}
 \let\thefootnote\relax\footnote{{\it Keywords}: faithful representation; Stone-von Neumann Theorem; Wigner-Mackey theory; Weil representation.}
\let\thefootnote\relax\footnote{{\it 2010 Mathematics Subject Classification}:  Primary 20C15; Secondary 20G05. }
%===============================================================================================
\section{Introduction}
\label{Sec:Intro}
For a given finite group $G$ and a field $K$, let $\mf(G)$ denote the least integer $n$ such that $G$ embeds into $\GL_n(K)$. Formally, if $d_{ \rho}$ denotes the degree of a $G$-representation $(\rho,V)$ over $K$, then we define: 
\begin{equation}
\mf(G):=\min_{ \ker \rho= \{1 \} } d_{ \rho}.
\end{equation}
Apart from its intrinsic interest, the question of computing or obtaining reasonable bounds for 
$\mf(G)$ has found several applications. For example when $G={\bf G}(\F{q})$, where $\bf G$ is a Chevalley group, such bounds have proven to be useful in various combinatorial problems that have to do with the group expansion problem~\cite{Green}. 

In contrast, in this paper, we are mostly concerned with situations in which $G$ is a $p$-group. For such groups, $\mf(G)$ is related to the theory of essential dimension of algebraic groups.
The notion of essential dimension $\mathrm{ed}_K(G)$ of a finite group $G$ over a field $K$ was introduced by Buhler and Reichstein~\cite{ReichsteinII}. The integer $\mathrm{ed}_K(G)$ is equal to the smallest number of algebraically independent parameters required to define a Galois $G$-algebra over any field extension of $K$.  Recently, Karpenko and Merkurjev~\cite{Merkurje} proved that the essential dimension of a $p$-group $G$ over a field $K$ containing a primitive $p$th root of unity is equal to $\mf(G)$. 
For brevity of the exposition, we will always assume that $K=\mathbb{C}$ (and hence drop the subscript $K$), though some of our results extend in a straightforward way to arbitrary fields that contain enough roots of unity.

In order to state the results, let us set some notation. 
Throughout this paper $F$ will denote a non-Archimedean local field with discrete valuation $\nu$. We will denote the associated ring of integers by $\mathcal{O}$, the unique maximal ideal of $\mathcal{O}$ by $\mathfrak{p}$, and the associated residue field by $\F{q}$ where $q=p^f$, and we call $f$ the {\it absolute inertia degree} of $F$. Moreover the number $e=\nu(p)$ is called the {\it absolute ramification index} of $F$. We remark that for the local field $\F{q}((T))$, we have $e=\nu(p)=\infty$.  Our first theorem gives 
an explicit formula for the dimension of the smallest faithful representation of the Heisenberg group over a large class of rings.
\begin{theorem}\label{Heisenberg} Let $\Hei_{2k+1}(\OO{n})$ denote the Heisenberg group defined over $\OO{n}$. Then
\begin{equation}
\mff\left(\Hei_{2k+1}(\OO{n})\right)=\sum_{i=0}^{ \xi-1 } fq^{k(n-i)}\ ,\ \xi=\min \left\{ e,n \right\},
\end{equation}
where $f$ is the absolute inertia degree, $e$ is the  absolute ramification index, and $q$ is the size of the residue field of $F$. 
\end{theorem}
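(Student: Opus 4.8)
The plan is to reduce the computation to the restriction of representations to the socle of the center, and then to an optimization over central characters governed by Stone--von Neumann theory. Write $H=\Hei_{2k+1}(\OO{n})$, let $Z\cong(\OO{n},+)$ be its center, and recall that for any finite $p$-group a representation is faithful if and only if its restriction to $\Omega_1(Z)$, the subgroup of central elements of order dividing $p$, is injective; this is because every nontrivial normal subgroup of a $p$-group meets $\Omega_1(Z)$ nontrivially. First I would identify $\Omega_1(Z)$ explicitly: since $\nu(p)=e$, the condition $px=0$ in $\OO{n}$ is equivalent to $\nu(x)\ge n-e$, so $\Omega_1(Z)=\mathfrak{p}^{n-\xi}/\mathfrak{p}^{n}$ with $\xi=\min\{e,n\}$, an elementary abelian group of $\F{p}$-rank $f\xi$. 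Thus a representation $\rho=\bigoplus_j\rho_j$ is faithful precisely when the central characters $\psi_j$ of the irreducible constituents $\rho_j$ restrict to a spanning set of the dual $\widehat{\Omega_1(Z)}$.

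Next I would compute the dimension of an irreducible with a prescribed central character. Writing $V=H/Z\cong(\OO{n})^{2k}$ with its standard symplectic form $B$, and fixing a central character $\psi$, one forms the alternating bicharacter $b_\psi(v,w)=\psi(B(v,w))$ on $V$. A direct computation shows that the set of values $\{B(v,w):w\in V\}$ is the ideal $\mathfrak{p}^{\nu(v)}$, whence $v$ lies in the radical of $b_\psi$ if and only if $\nu(v)\ge c$, where $c=c(\psi)$ is the conductor of $\psi$, defined as the least $c$ with $\psi|_{\mathfrak{p}^{c}}=1$. Hence the radical equals $(\mathfrak{p}^{c}/\mathfrak{p}^{n})^{2k}$ and $V/\mathrm{Rad}(b_\psi)\cong(\OO{c})^{2k}$ carries a nondegenerate symplectic form. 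Applying the Stone--von Neumann theorem to the corresponding nondegenerate Heisenberg quotient (the remaining degeneracy being absorbed into one-dimensional characters of the abelian preimage of the radical) shows that every irreducible representation with central character $\psi$ has dimension $\sqrt{|V/\mathrm{Rad}(b_\psi)|}=q^{kc(\psi)}$. I expect the careful justification of this step over $\OO{n}$ --- in particular that \emph{all} irreducibles with central character $\psi$ share the dimension $q^{kc(\psi)}$ --- to be the main technical obstacle.

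With these dimensions in hand, the problem becomes the following optimization: choose characters $\psi_1,\dots,\psi_N$ of $Z$ whose restrictions span $\widehat{\Omega_1(Z)}$ while minimizing $\sum_j q^{kc(\psi_j)}$. The key observation is that a character of conductor at most $c-1$ restricts on $\Omega_1(Z)=\mathfrak{p}^{n-\xi}/\mathfrak{p}^{n}$ to a character trivial on $\mathfrak{p}^{c-1}/\mathfrak{p}^{n}$, hence lands in the subspace $\widehat{(\mathfrak{p}^{n-\xi}/\mathfrak{p}^{c-1})}$ of $\widehat{\Omega_1(Z)}$, which has codimension $f(n-c+1)$. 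Projecting to the corresponding top quotient, for each $c\in\{n-\xi+1,\dots,n\}$ the characters of conductor at least $c$ must span a space of dimension $f(n-c+1)$, so at least $f(n-c+1)$ of the $\psi_j$ have conductor $\ge c$. Since the cost $q^{kc}$ is increasing in $c$, an Abel summation argument shows that the minimum is attained by taking exactly $f$ characters of each conductor $c=n-\xi+1,\dots,n$, yielding the lower bound $\sum_{c=n-\xi+1}^{n}fq^{kc}=\sum_{i=0}^{\xi-1}fq^{k(n-i)}$. Finally I would verify the matching upper bound by constructing, for each such conductor $c$, a set of $f$ characters whose restrictions fill the corresponding graded layer of $\widehat{\Omega_1(Z)}$, so that together they span the dual; the associated direct sum of irreducibles is then faithful and realizes the claimed dimension.
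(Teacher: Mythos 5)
Your proposal is correct, and its global skeleton coincides with the paper's: testing faithfulness on $\Omega_1(Z)=\mathfrak{p}^{n-\xi}/\mathfrak{p}^{n}$ is Remark~\ref{fact=Remark} together with Lemma~\ref{rank-rami}; the equivalence of faithfulness with the restricted central characters spanning $\widehat{\Omega}_1(Z)$ is Lemmas~\ref{central-span-faith} and~\ref{Meyer}; your count ``at least $f(n-c+1)$ constituents of conductor $\ge c$'' is, after the substitution $i=n-c$, exactly the paper's constraint $\alpha_i+\cdots+\alpha_{\xi-1}\le(\xi-i)f$; your Abel summation is Lemma~\ref{ineq}; and your $f$ characters per conductor filling the graded layers of $\widehat{\Omega}_1(Z)$ are precisely the paper's $b_{ij}=\omega_i\varpi^{j}$ from Lemma~\ref{cons-faith}. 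Where you genuinely diverge is the key dimension lemma. The paper deliberately avoids Stone--von~Neumann here (the introduction asserts it is ``no longer applicable'' in the ramified case) and instead runs the Wigner--Mackey little-group method on $H=A\rtimes L$, computing $|\mathrm{Stab}_L(\psi_{\mathbf{b},b})|=q^{ik}$ for a central character of level $i$ (Proposition~\ref{level}, Corollary~\ref{level-dim}). You instead form the radical of the bicharacter $\psi_b(B(\cdot,\cdot))$ and apply Theorem~\ref{SV} to the nondegenerate quotient; note your conductor satisfies $c(\psi_b)=n-i$, so the two computations agree, and your radical calculation is in fact the one the paper itself performs later (Lemmas~\ref{hei-ker} and~\ref{n-i}, via~\cite[Proposition 2.2]{Cliff}) for the Weil-representation argument of Section~\ref{Weil-Sec}. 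Your route buys a uniform conceptual reason why \emph{all} irreducibles with a fixed central character share the dimension $q^{kc(\psi)}$, without choosing orbit representatives or stabilizers, and your lower bound is slightly more robust since it only needs the restricted central characters to span, not to form a basis of cardinality $f\xi$; the Mackey route buys explicit models of every irreducible representation.

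One phrase in your sketch needs repair: the preimage $\tilde{R}\subseteq H$ of $\mathrm{Rad}(b_\psi)=\left(\mathfrak{p}^{c}/\mathfrak{p}^{n}\right)^{2k}$ is \emph{not} abelian in general, since its commutators land in $\mathfrak{p}^{2c}/\mathfrak{p}^{n}$, which is nonzero when $2c<n$. What is true, and suffices, is that $\psi_b$ kills $[\tilde{R},H]$: hence in any irreducible $\rho$ with central character $\psi_b$, Schur's lemma forces $\rho(\tilde{R})$ to act by scalars, i.e.\ through a character $\chi$ of $\tilde{R}$ extending $\psi_b$; any such $\chi$ is automatically $H$-invariant (because $\chi(hrh^{-1})=\chi(r)\,\psi_b([h,r])=\chi(r)$), so $\ker\chi$ is normal, and $H/\ker\chi$ is a finite two-step nilpotent group with center $\tilde{R}/\ker\chi$ and generic induced central character, to which Theorem~\ref{SV} applies and gives $\dim\rho=\sqrt{\left|V/\mathrm{Rad}(b_\psi)\right|}=q^{kc(\psi_b)}$. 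Existence of an irreducible with each prescribed $\psi_b$, needed for your upper bound, follows because $\psi_b$ is trivial on $[\tilde{R},\tilde{R}]$ and therefore extends to a character of $\tilde{R}$. With this adjustment your argument is complete.
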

For $n=1$ and $\mathcal{O}=\ZZ_p$, this theorem was previously known~\cite[Section 2]{ReichsteinIII}. However, the generalization to arbitrary $n$ and $\mathcal{O}$ requires a few new ideas.
One reason is that unlike the case $n=1$ and $\mathcal{O}=\ZZ_p$, in general the center of $\Hei_{2k+1}(\OO{n})$ is not a cyclic subgroup, and hence the faithful representation of minimal dimension is not necessarily irreducible.
Therefore, one cannot easily infer the minimal dimension of faithful representations of this group from the character table. In order to compute $\mff(\Hei_{2k+1}(\OO{n}))$, we have to carefully analyze the characters of  arbitrary representations of this group. 

To wit, we will first have to classify all irreducible representations of Heisenberg groups. This can be done by applying the celebrated  
Stone-von~Neumann Theorem. As it turns out, this method works nicely for finite quotient rings of {\it unramified} extensions of $p$-adic fields. Indeed in this case one can concretely compute the polarizing subgroups of the generic characters. This does not seem to be easy for finite quotient rings of {\it ramified} extensions and the Stone-von~Neumann Theorem is no longer applicable. Instead, we will directly apply the ``Mackey machine" to classify all irreducible representations of Heisenberg groups.  

By invoking the Weil representation along with Theorem~\ref{Heisenberg},  we will also compute $\mff(\U_k(\OO{n}))$ where $\U_{k}(\OO{n})\subseteq \GL_{k}(\OO{n})$ is the group of unitriangular matrices with entries in $\OO{n}$.
\begin{theorem}\label{Unipotent} Let $\Hei_{2k+1}(\OO{n})\subseteq G\subseteq\U_{k+2}(\OO{n})$ be a group and assume that $\mathrm{char}(\mathcal{O}/\mathfrak{p})\neq 2$. Then  
\begin{equation}
\mff(G)=\mff(\Hei_{2k+1}(\OO{n})).
\end{equation}
\end{theorem}
For some other classes of two-step nilpotent groups, one can also use the Stone--von~Neumann Theorem to give a different (and short) proof the following result, which was obtained previously by Meyer and Reichstein~\cite[Theorem 1.4]{ReichsteinI}.
\begin{theorem}\label{2-step}
Let $H$ be a finite two-step nilpotent $p$-group with a cyclic commutator subgroup. Then 
\begin{equation}
\mff(H)=\sqrt{[H:Z(H)]}+\mff(Z(H))-1,
\end{equation}
where $Z(H)$ is the center of $H$.
\end{theorem}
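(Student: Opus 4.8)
The plan is to determine $\mff(H)$ by combining the Stone--von~Neumann description of the irreducible representations of $H$ with the elementary fact that, for a finite $p$-group, a representation is faithful if and only if its restriction to the socle $\Omega := \{z \in Z(H) : z^p = 1\}$ of the center is faithful. Write $Z = Z(H)$ and $H' = [H,H]$, which is cyclic and which we may assume is nontrivial. Then $\Omega$ is an $\F{p}$-vector space; set $r := \dim_{\F{p}}\Omega$, and note $\mff(Z) = r$, since a sum of linear characters of an abelian $p$-group is faithful exactly when the restrictions of those characters to $\Omega$ span the dual space $\Omega^{\ast}$. Thus the target formula reads $\mff(H) = \sqrt{[H:Z]} + r - 1$, and faithfulness of any $\rho = \bigoplus_i \rho_i$ becomes the requirement that the functionals $\phi_i := (\chi_i)|_{\Omega}$ span $\Omega^{\ast}$, where $\chi_i$ denotes the central character of $\rho_i$.

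First I would classify the relevant irreducibles through the commutator pairing. Since $H' \subseteq Z$, a character $\chi$ of $Z$ defines an alternating form $B_\chi(\bar x,\bar y) = \chi([x,y])$ on $H/Z$. If $\chi$ is trivial on $H'$, then $\rho$ factors through the abelianization and is one-dimensional. The crucial point, and the place where cyclicity of $H'$ is used, is that a character of the cyclic group $H'$ is nontrivial on its unique subgroup $L := \Omega \cap H'$ of order $p$ if and only if it is \emph{faithful} on $H'$; and when $\chi|_{H'}$ is faithful, the radical of $B_\chi$ equals $Z/Z$, so $B_\chi$ is nondegenerate on $H/Z$. The Stone--von~Neumann Theorem then produces a single irreducible representation with central character $\chi$, of dimension $\sqrt{[H:Z]}$ (in particular $[H:Z]$ is a perfect square). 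I will use only these two extreme cases.

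For the upper bound I would exhibit a faithful representation of the claimed dimension. Choose a character $\chi$ of $Z$ whose restriction to $H'$ is faithful (possible because $\CC^{\ast}$ is divisible, so restriction is surjective on $\Hom(\,\cdot\,,\CC^{\ast})$), and let $\rho_\chi$ be the associated irreducible of dimension $\sqrt{[H:Z]}$; its functional $\phi_\chi = \chi|_{\Omega}$ is nonzero on the line $L$. I then adjoin $r-1$ linear characters $\lambda_1,\dots,\lambda_{r-1}$ of $H$ whose restrictions to $\Omega$ form a basis of the annihilator $L^{\perp}$, which has dimension $r-1$; such characters exist because characters of $H/H'$ restrict onto every functional on $\Omega$ that vanishes on $L$, again by divisibility of $\CC^{\ast}$. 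Since $\phi_\chi \notin L^{\perp}$, the $r$ functionals $\phi_\chi,\phi_1,\dots,\phi_{r-1}$ span $\Omega^{\ast}$, so $\rho_\chi \oplus \bigoplus_{j=1}^{r-1}\lambda_j$ is faithful of dimension $\sqrt{[H:Z]} + (r-1)$.

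The lower bound is where the accounting must be tight, and this is the step I expect to be the main obstacle. Given any faithful $\rho = \bigoplus_i \rho_i$, the functionals $\phi_i$ span $\Omega^{\ast}$, so at least one of them, say $\phi_{i_0}$, fails to lie in $L^{\perp}$; by the cyclicity observation this forces $\chi_{i_0}|_{H'}$ to be faithful, whence $\dim\rho_{i_0} = \sqrt{[H:Z]}$. Removing $\phi_{i_0}$ can lower the span by at most one dimension, so the remaining functionals $\{\phi_i : i \neq i_0\}$ span a subspace of dimension at least $r-1$, which means there are at least $r-1$ further constituents, each of dimension at least $1$. Summing gives $\dim\rho \geq \sqrt{[H:Z]} + (r-1)$, matching the upper bound. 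The delicate part is to be certain that faithfulness on $L$ unavoidably forces one full constituent of dimension $\sqrt{[H:Z]}$ and that the residual $r-1$ directions of $\Omega^{\ast}$, being orthogonal to $L$, cannot be covered more cheaply than by $r-1$ one-dimensional characters; both facts hinge precisely on the interaction between cyclicity of $H'$ and the nondegeneracy criterion recorded above.
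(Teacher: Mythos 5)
Your proposal is correct and is essentially the paper's own argument: the paper likewise isolates one irreducible constituent whose central character is faithful on the cyclic commutator subgroup (detected there via a generator of $[H,H]$ and primitive $p^n$th roots of unity, which is the same observation as your $L^{\perp}$ criterion on $\Omega$), pins down its dimension $\sqrt{[H:Z(H)]}$ by the Stone--von~Neumann theorem (Corollary~\ref{cor-SV} together with Lemma~\ref{max-[H:A]}), and gets the matching upper bound from one induced representation plus $r-1$ linear characters trivial on $[H,H]$ (Lemma~\ref{r-1char} and Corollary~\ref{Upper-cyclic-comm}). The hedging in your final paragraph is unnecessary---the two facts you flag are exactly what your cyclicity-plus-nondegeneracy observation and Stone--von~Neumann uniqueness already deliver, and your direct span count (any faithful sum needs one ``big'' constituent and at least $r-1$ others) is a harmless substitute for the paper's Lemma~\ref{Meyer}.
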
 

For some semidirect products, the minimal dimension of faithful representations can be computed by the following proposition whose proof is fairly simple (see Section~\ref{Representation of Linear groups}).   
\begin{proposition}\label{Aff-prop} Let $H$ be a finite group acting (by group automorphisms) on a cyclic group $C = \langle a \rangle$ of order $p^n$, where $p$ is a prime number and $n \ge 1$. Let $Ha$ denote the $H$-orbit of the generator $a\in C$. Then 
\begin{equation}\label{H-action}
\mff(C \rtimes H)\geq |Ha|. 
\end{equation}  
Moreover, equality occurs in~\eqref{H-action} if $C$ is a faithful $H$-module.
\end{proposition}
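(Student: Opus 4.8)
The plan is to prove the lower bound by restricting a faithful representation to the normal cyclic subgroup $C$ and applying Clifford theory, and then to establish equality by inducing a faithful character of $C$ up to $C\rtimes H$. For the lower bound, let $(\rho,V)$ be any faithful representation of $G=C\rtimes H$. Since $C$ is abelian and normal, $\rho|_C$ decomposes as a direct sum of linear characters of $C$, and because $\rho$ is faithful, $\rho|_C$ is faithful as well. The first key observation is that the subgroups of the cyclic group $C$ form a chain, so the intersection of the kernels of the characters occurring in $\rho|_C$ can be trivial only if at least one faithful character $\chi$ of $C$ (equivalently, a generator of $\widehat C$) occurs. Next I would invoke Clifford theory: since $C\trianglelefteq G$, conjugation by $h\in H$ carries the $\chi$-isotypic subspace of $V$ isomorphically onto the $(h\cdot\chi)$-isotypic subspace, where $(h\cdot\chi)(c)=\chi(h^{-1}\cdot c)$. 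Hence every character in the $H$-orbit of $\chi$ occurs in $\rho|_C$; as these characters are distinct and one-dimensional, $\dim V\ge |H\cdot\chi|$.

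It then remains to identify $|H\cdot\chi|$ with $|Ha|$, which I would do at the level of stabilizers. The stabilizer in $H$ of the generator $a$ is exactly $\ker(H\to\Aut(C))$, because an automorphism of $C$ fixing the generator $a$ fixes all of $C$; and for a faithful character $\chi$ the stabilizer in $H$ is the same kernel, since $\chi\circ h^{-1}=\chi$ together with injectivity of $\chi$ forces $h$ to act trivially on $C$. Therefore $|Ha|=[H:\ker(H\to\Aut(C))]=|H\cdot\chi|$, which completes the proof of~\eqref{H-action}.

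For the equality statement, assume $C$ is a faithful $H$-module, so that $H\hookrightarrow\Aut(C)$ and $|Ha|=|H|$. I would take a faithful character $\chi$ of $C$ and consider $W=\Ind_C^G\chi$, of dimension $[G:C]=|H|=|Ha|$. By Mackey's formula for the normal subgroup $C$, the restriction $W|_C$ is the direct sum of the characters $h\cdot\chi$ over $h\in H$; in particular it contains the faithful character $\chi$, so $N:=\ker(W)$ satisfies $N\cap C=\{1\}$. The final step is to upgrade this to $N=\{1\}$: since $N$ and $C$ are both normal in $G$, the commutator $[N,C]$ lies in $N\cap C=\{1\}$, so $N$ centralizes $C$; but faithfulness of the $H$-action means the centralizer of $C$ in $G$ is $C$ itself, whence $N\subseteq C$ and thus $N=N\cap C=\{1\}$. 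This shows $W$ is faithful and gives $\mff(G)\le|Ha|$, matching the lower bound.

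I expect the main obstacle to be the orbit-size identification $|H\cdot\chi|=|Ha|$, which is where the cyclicity of $C$ and the passage between $C$ and its dual $\widehat C$ genuinely enter; the stabilizer computation is the cleanest route and sidesteps an explicit comparison of the $H$-actions on generators of $C$ and of $\widehat C$. The verification that a faithful character must occur in $\rho|_C$ (using that the subgroups of a cyclic $p$-group are totally ordered) and the normal-subgroup argument closing out the equality case are both short, so the crux is really this duality between generators of $C$ and faithful characters.
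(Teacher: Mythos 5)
Your proof is correct, and while your lower bound follows essentially the paper's route, your treatment of the equality case is genuinely different. For the lower bound, both arguments extract a faithful character $\chi$ of $C$ from $\rho|_C$ using the chain of subgroups of a cyclic $p$-group (the paper's Lemma~\ref{Lemma-primitive}) and then use the $H$-invariance of the set $\Delta$ of constituents; the only divergence is cosmetic. The paper exhibits $|Ha|$ pairwise distinct constituents explicitly by evaluating at $a$: if $h_ia=a^{m_i}$, the translate $\chi_{h_i}$ satisfies $\chi_{h_i}(a)=\zeta^{m_i}$ with $\zeta$ a primitive $p^n$th root of unity, so these are distinct. You instead identify $|H\cdot\chi|=|Ha|$ by showing both stabilizers equal $\ker\left(H\to\Aut(C)\right)$, which is an equivalent and equally short computation. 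The real difference lies in proving that $W=\Ind_C^{G}(\chi)$ is faithful, where $G=C\rtimes H$. The paper computes the induced character via Lemma~\ref{ind-Char} --- $\chi_W(a,h)$ vanishes unless $h=\1$, and equals $\sum_{h'\in H}\chi(h'a)$ when $h=\1$ --- and then invokes the extremal fact that a sum of $|H|$ roots of unity equals $|H|$ only if all are $1$ (Lemma~\ref{Ker-rho}), all packaged as Lemma~\ref{faithfulH}. You argue structurally: Mackey restriction gives $W|_C=\bigoplus_{h\in H}h\cdot\chi$, which contains the faithful character $\chi$, so $N:=\ker W$ meets $C$ trivially; then $[N,C]\subseteq N\cap C=\{\1\}$ since both subgroups are normal, so $N$ centralizes $C$, and faithfulness of the $H$-action forces the centralizer of $C$ in $G$ to be $C$ itself, whence $N=\{\1\}$. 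Your centralizer argument avoids character computations entirely and is arguably cleaner in this setting, but there is a trade-off worth noting: it uses the faithfulness of the action twice (for $|Ha|=|H|$ and for $Z_G(C)=C$), whereas the paper's Lemma~\ref{faithfulH} requires no faithfulness hypothesis on the action and only the weaker condition that the $H$-translates of $\chi$ have trivial common kernel. That extra generality is exactly what lets the paper reuse the same lemma for the non-cyclic additive group $\OO{n}$ in the proof of Theorem~\ref{Aff}, where no single character of the abelian normal subgroup is faithful; your argument, as written, would not transfer to that situation without modification. Within the hypotheses of the proposition, however, your proof is complete.
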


It is worth noting that if $C$ is not cyclic, $\mff(C \rtimes H)$ could be much smaller than the size of a typical $H$-orbit. For instance, let $C$ be the direct product of $n$ copies of the group $\Z{2}$ and let $H=S_n$ be the symmetric group on $n$ letters acting on $C$ by permuting the factors. It is easy to see that signed permutation matrices provide a faithful $n$-dimensional representation of the group $C \rtimes H$, whereas typical $H$-orbits have size around $ {n \choose n/2}$, which is clearly much larger than $n$. However, for some spacial semi-direct products involving the (non-cyclic) additive group of $\OO{n}$, we can still give some explicit bounds. For a commutative ring $R$, 
we will write $\Aff(R)$ for the affine group $ R \rtimes R^\times$, where $R^{\times}$, the group of units of $R$, acts by multiplication on the additive group $(R,+)$. 
\begin{theorem}\label{Aff} With the above notation we have
\begin{equation}
\mff(\Aff(\OO{n}))=q^n-q^{n-1}.
\end{equation}
\end{theorem}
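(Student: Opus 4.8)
The plan is to realize $\Aff(\OO{n})$ as a semidirect product $A\rtimes H$ with normal abelian subgroup $A=(\OO{n},+)$ and $H=(\OO{n})^\times$ acting by multiplication, and to analyze its representations through the Wigner--Mackey machine together with Clifford theory. Since $|H|=q^n-q^{n-1}$, the theorem asserts precisely that $\mff$ equals the order of the point stabilizer $H$. Note that $A$ is not cyclic in general (its group structure depends on $e$ and $f$), so Proposition~\ref{Aff-prop} does not apply directly and we must argue by hand. The first step is to fix an additive character $\Lambda$ of $\OO{n}$ of conductor $\mathfrak{p}^n$, i.e.\ nontrivial on the minimal ideal $\mathfrak{p}^{n-1}/\mathfrak{p}^n$. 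Because $\OO{n}$ is a finite chain ring, the pairing $(r,x)\mapsto\Lambda(rx)$ is nondegenerate, so $r\mapsto\psi_r:=\Lambda(r\,\cdot\,)$ identifies $\widehat A$ with $\OO{n}$ as an $H$-module, with $h\cdot\psi_r=\psi_{rh^{-1}}$. Consequently the $H$-orbits on $\widehat A$ are exactly the sets of $\psi_r$ with $r$ of fixed valuation; the \emph{generic} orbit (those $r\in H$) has size $|H|=q^n-q^{n-1}$, and a short computation using nondegeneracy of $\Lambda$ shows its stabilizer is trivial.

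For the upper bound $\mff(\Aff(\OO{n}))\le q^n-q^{n-1}$, the Mackey machine attached to the generic orbit produces the irreducible representation $\pi=\Ind_A^{\Aff(\OO{n})}\psi_u$ (for any $u\in H$), of dimension $[\Aff(\OO{n}):A]=|H|=q^n-q^{n-1}$. I would then verify that $\pi$ is faithful. Restricted to $A$, the representation $\pi$ is the sum of all unit characters $\{\psi_v:v\in H\}$, and their common kernel is $\{x:\Lambda(vx)=1\text{ for all }v\in H\}$; for a nonzero $x$ of valuation $j\le n-1$ the set $\{vx:v\in H\}$ consists of all elements of valuation $j$, which generate $\mathfrak{p}^j\supseteq\mathfrak{p}^{n-1}$ additively, so $\Lambda$ is nontrivial on it and $x$ is detected. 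Hence $\ker\pi\cap A=1$. Since both $A$ and $\ker\pi$ are normal, $[\ker\pi,A]\subseteq\ker\pi\cap A=1$, so $\ker\pi$ centralizes $A$; a direct computation in $A\rtimes H$ shows the centralizer of $A$ equals $A$ itself, whence $\ker\pi\subseteq A$ and therefore $\ker\pi=1$.

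For the lower bound $\mff(\Aff(\OO{n}))\ge q^n-q^{n-1}$, let $\rho$ be any faithful representation and decompose $\rho|_A=\bigoplus_\chi m_\chi\chi$ over characters of $A$. Faithfulness of $\rho$ forces the appearing characters to have trivial common kernel; in particular some appearing $\chi$ must be nontrivial on $\mathfrak{p}^{n-1}/\mathfrak{p}^n$, since otherwise $\mathfrak{p}^{n-1}/\mathfrak{p}^n\neq 0$ would lie in $\ker(\rho|_A)$. Under the parametrization $\chi=\psi_r$, nontriviality on $\mathfrak{p}^{n-1}$ forces $r\in H$ (if $r\in\mathfrak{p}$ then $\psi_r$ kills $\mathfrak{p}^{n-1}$), so $\chi$ lies in the generic orbit. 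By Clifford theory the set of characters occurring in $\rho|_A$ is $H$-stable, hence it contains the entire generic orbit, of cardinality $|H|=q^n-q^{n-1}$, each with multiplicity at least one. Therefore $\dim\rho=\sum_\chi m_\chi\ge q^n-q^{n-1}$, which together with the upper bound completes the proof.

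I expect the main obstacle to be twofold. First, in the ramified case $A$ is a nontrivial product of cyclic $p$-groups, so the clean identification $\widehat A\cong\OO{n}$ as an $H$-module — the existence of $\Lambda$, nondegeneracy of the induced pairing, and the attendant orbit/stabilizer computation — rests on the self-duality (Gorenstein/Frobenius) property of the chain ring $\OO{n}$ and must be set up carefully rather than taken for granted. Second, the passage from faithfulness on $A$ (which is all that characters see directly) to faithfulness on the whole group, carried out above via the commutator and centralizer computation, is where one must ensure that no nontrivial element of the $H$-direction slips into the kernel; everything else reduces to the orbit counting already recorded in the statement of the theorem.
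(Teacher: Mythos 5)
Your proposal is correct, and in substance it follows the same route as the paper: both arguments identify $\widehat{\OO{n}}$ with $\OO{n}$ via a fixed primitive additive character $\psi$; the lower bound in both cases comes from the observations that any faithful $\rho$ must restrict to $A=(\OO{n},+)$ faithfully, that (since the ideals of $\OO{n}$ form a chain) some character appearing in $\rho|_A$ is then primitive, i.e.\ of the form $\psi_r$ with $r$ a unit, and that the set of appearing characters is $(\OO{n})^\times$-stable, hence has size at least $|(\OO{n})^\times|=q^n-q^{n-1}$; and the upper bound in both cases is the induced representation $\Ind_{\OO{n}}^{\Aff(\OO{n})}(\psi)$ of dimension $q^n-q^{n-1}$. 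The one genuine divergence is how faithfulness of this induced representation is verified. The paper invokes its Lemma~\ref{faithfulH}: the induced character vanishes off $A$ and on $A$ equals $\sum_{t\in(\OO{n})^\times}\psi(tx)$, so faithfulness reduces to producing, for each $0\neq x$, a unit $t$ with $\psi(tx)\neq 1$, which it does by the short trick of taking $t=1$ or $t=1+s$ where $\psi(sx)\neq 1$. You instead prove $\ker\pi\cap A=\{\1\}$ by noting that $\{tx: t\in(\OO{n})^\times\}$ is the full set of elements of valuation $j=\nu(x)$, which additively generates $\mathfrak{p}^j/\mathfrak{p}^n\supseteq\mathfrak{p}^{n-1}/\mathfrak{p}^n$ where $\psi$ is nontrivial, and then upgrade this to $\ker\pi=\{\1\}$ via $[\ker\pi,A]\subseteq\ker\pi\cap A=\{\1\}$ together with the computation $C_{\Aff(\OO{n})}(A)=A$. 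Both verifications are sound (your generation argument works even for $q=2$, since $y=\varpi^j+(y-\varpi^j)$ writes any element of $\mathfrak{p}^j$ as a sum of two valuation-$j$ elements); the paper's character computation is marginally more economical because it sees the kernel off $A$ for free, whereas your structural route through the centralizer is a touch longer but has the merit of generalizing to settings where one only controls faithfulness on the abelian normal subgroup.
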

\begin{remark}
For a finite field $\mathbb{F}_q$, $\mathrm{char}(\F{q})\neq 2$, let $\iota$ be a faithful one-dimensional representation of the cyclic group $\mathbb{F}^\times_{q^2}$ and let $\rho=\rho_\iota$ be the corresponding cuspidal  representation of $\GL_2(\mathbb{F}_q)$ of dimension $q-1$. Using an explicit computation of the character of $\rho$ (see~\cite{shapiro}, $\S$22) one can see that $\chi_\rho(g)=q-1$ if and only if $g$ is the identity matrix. This shows that $\rho$ is a faithful representation of $\GL_2(\mathbb{F}_q)$ (see Lemma~\ref{Ker-rho}) and so $\mff(\GL_2(\mathbb{F}_q))\leq q-1$. But from Theorem~\ref{Aff} we also know that $\mff(\GL_2(\mathbb{F}_q))\geq q-1$. Hence $\mff(\GL_2(\mathbb{F}_q))=q-1$. Moreover from Theorem~\ref{Aff} we have $\mff(\GL_2(\OO{n}))\geq q^{n}-q^{n-1}$. It is of interest to know how sharp this bound is.
\end{remark}

This paper is organized as follows. In Section~\ref{pre}, we will set some notation and gather some information about the representation theory of additive groups. In Section~\ref{pg}, we analyze faithful representations of $p$-groups and state a verion of the Stone--von Neumann Theorem that will be used later. Section~\ref{heisenberg} and Section~\ref{Weil-Sec} are devoted to the proof of Theorem \ref{Heisenberg} and Theorem~\ref{Unipotent}. Finally we will present the proofs of Theorem~\ref{Aff} in Section~\ref{Representation of Linear groups}. 
%================================================
\section{Preliminary}\label{pre}
In this section we set some notation which will be used throughout this paper. We also recall some basic facts about local fields that can be found in \cite{Neukirch,Local-fields}.
\subsection{Notation}\label{Notation} 
Let $G$ be a group with the identity element $\1$. If $x,y\in G$ then the commutator of $x$ and $y$ is denoted by  $[x,y]:=xyx^{-1}y^{-1}$. The center and the commutator subgroup of $G$ will be denoted, respectively, by $Z(G)$ and $[G,G]$. For a $p$-group $G$, we write 
$
\Omega_1(G):=\{g\in G: g^p=\1\}.
$
The Pontryagin dual of an abelian group $A$, i.e., $\Hom(A, \CC^{\ast})$, will be denoted by $\widehat{A}$. Evidently, when $A$ is an elementary abelian $p$-group,  $\widehat{A}$ is canonically a $\FZ$-vector space. 
 We will use the shorthand ${\bf e}(x):=\exp(2\pi ix)$. We will denote vectors by boldface letters.  Finally let $H\leq G$ be a subgroup and assume that $\rho: H\to \GL(V)$ is a representation. An extension of $\rho$ to $G$ is a representation $\tilde{\rho}: G\to \GL(V)$ such that $\tilde{\rho}|_H=\rho$.  
%======================================================
\subsection{Additive characters of quotient rings of local fields}\label{Local-Fields}
A non-Archimedean local field is a complete field with respect to a discrete valuation that has a finite residue field. By the well-known classification of local fields, any non-Archimedean local field is isomorphic to a finite extension of $\Q_p$ ($p$ is a prime number) or is isomorphic to the field of formal Laurent series $\F{q}((T))$ over a finite field with $q=p^f$ elements~\cite[Chapter II, \S 4, \S 5]{Local-fields}. For a non-Archimedean local field $F$ with  discrete valuation $\nu$, we will denote its ring of integers, its unique prime ideal and its residue field  by $\mathcal{O}$, $\mathfrak{p}$ and $\F{q}$ respectively. 
We will also fix a uniformizer, denoted by $\varpi$. 
For any integer $m\in\ZZ$, write $\mathfrak{p}^m:=\{x\in F: \nu(x)\geq m\}$. 
We have thus the following filtration of $F$:
$$
F\supseteq\cdots\supseteq\mathfrak{p}^{-2}\supseteq \mathfrak{p}^{-1}\supseteq \mathfrak{p}^0=\mathcal{O}\supseteq\mathfrak{p}\supseteq\mathfrak{p}^2\supseteq\cdots \supseteq \{0\}.
$$
For all $m\in \mathbb{Z}$, there exists a natural isomorphism of additive groups $\mathfrak{p}^{m}/\mathfrak{p}^{m+1}\cong\mathcal{O}/\mathfrak{p}$.

Set $R:=\OO{n}$.  An additive character $\psi:R\to \CC^*$ is called {\it primitive} if $\ker\psi$ does not contain a non-zero ideal of $R$. Note that a primitive character exists, or else all characters of $R$ would come from $R/\mathfrak{a}$, for some ideal $\frak{a}$, and a counting argument shows this is not possible. 

Fix a primitive character $\psi: R\to \CC^*$. For $b\in R$ we define 
$$
\psi_b: R\to \CC^*\ ,\ x\mapsto \psi(bx).
$$
It is straightforward to show that $\Phi_1: R\to \widehat{R}$ that maps $b\mapsto \psi_b$, is a group isomorphism.
Now let $\frak{a}$ be an ideal in $R$ with the annihilator $\Ann(\frak{a}):=\{r\in R: r\frak{a}=0\}$. Since the restriction map $\widehat{R}\to \widehat{\frak{a}}$ is a surjective group homomorphism, we deduce that the map
$$\Phi_2: R/\Ann(\frak{a})\to \widehat{\frak{a}}\ , \ b+\Ann(\frak{a})\mapsto \restr{{\psi_b}}{\frak{a}},$$
 is a group isomorphism.
Notice that $\Ann(\mathfrak{p}^m/\mathfrak{p}^n)=\mathfrak{p}^{(n-m)}/\mathfrak{p}^n$, and so we obtain the following lemma.
\begin{lemma}\label{Character-local} For integers $0\leq m\leq n$, every additive character of the abelian group $\mathfrak{p}^m/\mathfrak{p}^n$ is of the form
\begin{equation*}
\psi_{b}: \mathfrak{p}^m/\mathfrak{p}^n\to \CC^* \ ,
\ x\mapsto \psi(bx),
\end{equation*}
for a unique 
$b+\mathfrak{p}^{(n-m)}/\mathfrak{p}^n
\in 
(\OO{n})/(\mathfrak{p}^{(n-m)}/\mathfrak{p}^n)\cong \OO{n-m}$. In particular $\widehat{\mathfrak{p}^m/\mathfrak{p}^n}\cong \OO{(n-m)}$.
\end{lemma}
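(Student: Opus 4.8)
The plan is to read off the statement directly from the isomorphism $\Phi_2$ established immediately above, specialized to the ideal $\mathfrak{a}=\mathfrak{p}^m/\mathfrak{p}^n$ of $R=\OO{n}$. Recall that $\Phi_2$ sends $b+\Ann(\mathfrak{a})$ to the restricted character $\psi_b|_{\mathfrak{a}}$, which is exactly the map $x\mapsto\psi(bx)$ on $\mathfrak{a}$, and that it is an isomorphism $R/\Ann(\mathfrak{a})\to\widehat{\mathfrak{a}}$. Consequently, once $\Ann(\mathfrak{a})$ and the quotient $R/\Ann(\mathfrak{a})$ are identified for this particular $\mathfrak{a}$, the claimed form of an arbitrary character, the uniqueness of the representing class $b$, and the abstract isomorphism $\widehat{\mathfrak{p}^m/\mathfrak{p}^n}\cong\OO{(n-m)}$ all follow simultaneously.

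The one computational point is the annihilator. I would argue that $r\in R$ satisfies $r\cdot(\mathfrak{p}^m/\mathfrak{p}^n)=0$ precisely when $r\mathfrak{p}^m\subseteq\mathfrak{p}^n$; writing $\nu(r)=j$, this amounts to $j+m\geq n$, i.e. $\nu(r)\geq n-m$, so that $\Ann(\mathfrak{p}^m/\mathfrak{p}^n)=\mathfrak{p}^{(n-m)}/\mathfrak{p}^n$ as already recorded before the statement. This step uses only the additivity of $\nu$ under multiplication together with the description $\mathfrak{p}^j=\{x:\nu(x)\geq j\}$.

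It then remains to apply the third isomorphism theorem, which gives
\begin{equation*}
R/\Ann(\mathfrak{a})=(\mathcal{O}/\mathfrak{p}^n)\big/(\mathfrak{p}^{(n-m)}/\mathfrak{p}^n)\cong\mathcal{O}/\mathfrak{p}^{(n-m)}=\OO{(n-m)}.
\end{equation*}
Substituting this into $\Phi_2$ yields both the explicit parametrization of the characters of $\mathfrak{p}^m/\mathfrak{p}^n$ and the final isomorphism. I do not anticipate a genuine obstacle: essentially all of the work is carried by $\Phi_2$ and by the short annihilator computation, and what is left is bookkeeping with the canonical identifications among the successive quotients of the filtration of $F$.
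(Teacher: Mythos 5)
Your proposal is correct and takes essentially the same route as the paper: the paper also deduces the lemma directly from the isomorphism $\Phi_2$ specialized to $\mathfrak{a}=\mathfrak{p}^m/\mathfrak{p}^n$, combined with the observation $\Ann(\mathfrak{p}^m/\mathfrak{p}^n)=\mathfrak{p}^{(n-m)}/\mathfrak{p}^n$ and the identification $(\OO{n})/(\mathfrak{p}^{(n-m)}/\mathfrak{p}^n)\cong \OO{(n-m)}$. Your valuation argument for the annihilator merely fills in a step the paper states without proof, and it is accurate.
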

%======================================================
\section{Faithful representations of some $p$-groups}\label{pg} In this section after reviewing some basic facts on central characters of faithful representations of $p$-group and recalling the Stone--von~Neumann Theorem,  we compute the minimal dimension of faithful representations of two-step nilpotent $p$-groups with a cyclic commutator subgroup. 
%==========================================
\subsection{Central characters of faithful representations of $p$-groups} 

Let $A$ be a finite abelian group. We denote the minimal number of generators of $A$ by $d(A)$. For an exact sequence of abelian groups $0\to A_1 \to A\to A_2\to 0$, we have the following inequalities  
\begin{equation}\label{d(A)}
\max\{d(A_i): i=1,2\}\leq d(A)\leq d(A_1)+d(A_2).
\end{equation} 

The number of invariant factors of $A$ will be denoted by $\rank(A)$. A direct consequence of the elementary divisor theory is the equality $\rank(A)=d(A)$. 
Evidently we have $\mff(A)\leq \rank(A)$. Now for a given faithful representation $\rho: A\to \GL_m(\CC)$, by decomposing $\rho$ into irreducible representations and applying~\eqref{d(A)}, we get $d(A)=\rank(A)\leq \mff(A)$. Hence for a finite abelian group $A$ we have $\mff(A)=d(A)=\rank(A)$.
We will summarize these in the following lemma:
\begin{lemma} For a finite abelian $p$-group $A$ we have
\begin{equation}\label{Tensor-pabelian}
d(A)=\rank(A)=\mff(A)=\dim_{\FZ}(A\otimes_{\ZZ} \FZ)=\dim_{\FZ}(\Omega_1(A)).
\end{equation}
\end{lemma}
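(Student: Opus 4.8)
The plan is to reduce all five quantities to the single integer $r$, the number of invariant factors of $A$, using the structure theorem $A\cong\bigoplus_{i=1}^{r}\Z{p^{a_i}}$ with $a_1\geq\cdots\geq a_r\geq 1$, so that $r=\rank(A)$. The two middle equalities $d(A)=\rank(A)=\mff(A)$ have essentially already been recorded: $d(A)=\rank(A)$ is elementary divisor theory, and the sandwich $\rank(A)\leq\mff(A)\leq\rank(A)$ comes from the paragraph above, decomposing a faithful representation of the abelian group $A$ into characters and applying~\eqref{d(A)}. Thus the genuinely new content is to show that $A\otimes_{\ZZ}\FZ$ and $\Omega_1(A)$ are both $\FZ$-vector spaces of dimension $r$, and to glue these facts to $d(A)$.

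First I would treat the two remaining invariants simultaneously through the multiplication-by-$p$ endomorphism $\mu\colon A\to A$, $g\mapsto g^p$. Its kernel is exactly $\Omega_1(A)$ by definition, while its cokernel is $A/A^p$, and tensoring the right-exact sequence $\ZZ\xrightarrow{\cdot p}\ZZ\to\FZ\to 0$ with $A$ identifies $A\otimes_{\ZZ}\FZ\cong A/A^p$. Both $\Omega_1(A)$ and $A/A^p$ are annihilated by $p$, hence are $\FZ$-vector spaces. Since $A$ is finite, $|A|=|\ker\mu|\cdot|A^p|$, and therefore $|A/A^p|=|A|/|A^p|=|\ker\mu|=|\Omega_1(A)|$; two elementary abelian $p$-groups of equal order have equal $\FZ$-dimension.

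Next I would identify this common dimension with $d(A)$. The cleanest route uses $A\otimes_{\ZZ}\FZ\cong A/A^p$ together with a Nakayama-type argument tailored to finite $p$-groups: a subset of $A$ generates $A$ if and only if its image spans $A/A^p$. Indeed, if a subgroup $B\leq A$ satisfies $BA^p=A$, iterating gives $BA^{p^k}=A$, and $A^{p^k}=\1$ for large $k$ because $A$ is a finite $p$-group, whence $B=A$. Consequently a minimal generating set projects to a basis of $A/A^p$, giving $d(A)=\dim_{\FZ}(A\otimes_{\ZZ}\FZ)$. Alternatively one can simply read everything off the decomposition, since $(\Z{p^{a}})\otimes_{\ZZ}\FZ\cong\FZ$ and $\Omega_1(\Z{p^{a}})\cong\FZ$ for each $a\geq 1$, so both target groups are isomorphic to $(\FZ)^{r}$.

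I do not expect a serious obstacle here; the only point needing a little care is keeping the Nakayama step self-contained over $\ZZ$, rather than quoting the lemma for local rings, and the finiteness of $A$ makes this elementary. Assembling the chain $d(A)=\rank(A)=\mff(A)$ from the preceding discussion with the freshly established $\dim_{\FZ}(A\otimes_{\ZZ}\FZ)=\dim_{\FZ}(\Omega_1(A))=d(A)$ then yields all the claimed equalities.
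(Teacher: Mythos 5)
Your proposal is correct and follows essentially the same route as the paper: the equalities $d(A)=\rank(A)=\mff(A)$ are obtained exactly as in the paragraph preceding the lemma (elementary divisor theory plus the sandwich argument decomposing a minimal faithful representation into characters and applying~\eqref{d(A)}), while the remaining identifications $\dim_{\FZ}(A\otimes_{\ZZ}\FZ)=\dim_{\FZ}(\Omega_1(A))=d(A)$ are standard facts the paper simply records without proof. Your kernel--cokernel counting for the multiplication-by-$p$ map and the self-contained Nakayama-style step are both sound (and your alternative of reading the dimensions off the invariant-factor decomposition is exactly the implicit justification in the paper), so there is nothing to correct.
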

Now let $E$ be a finite elementary abelian $p$-group with the canonical $\FZ$-vector space structure. One can verify that every one-dimensional representation $ \chi: E \to \CC^{\ast}$ factors uniquely as $ \chi=  \epsilon\circ \chi_\circ$, where $ \chi_\circ \in \Hom(E, \FZ)$ and the embedding $ \epsilon:\FZ\to \CC^*$ is defined by $ \epsilon(x+p\ZZ)={\bf e}(x/p)$. Hence the $\FZ$-linear map
\begin{equation}\label{iso-Omei}
\widehat{E}\rightarrow \Hom(E,\FZ)\ ,\  \chi\mapsto \chi_\circ,
\end{equation}
provides an isomorphism of $\FZ$-vector spaces between 
$\widehat{E}$ and $\Hom(E, \FZ)$. 
Now, let $H$ be a finite $p$-group. Applying~\eqref{iso-Omei}, we  obtain the $\FZ$-isomorphism 
\begin{equation}\label{iso-Ome}
\Hom(\Omega_1(Z(H)),\CC^*)\rightarrow \Hom(\Omega_1(Z(H)),\FZ).
\end{equation}
Hereafter the $\FZ$-vector space $\mathrm{Hom}(\Omega_1(Z(H)),\CC^*)$ will be denoted by $\widehat{\Omega}_1(Z(H))$.

\begin{remark}\label{fact=Remark}
Recall the standard fact that for a finite $p$-group $H$, any non-trivial normal subgroup of $H$ intersects  $Z(H)$ and hence $\Omega_1(Z(H))$ non-trivially. Therefore a representation of $H$ is faithful if and only if its restriction to $\Omega_1(Z(H))$ is faithful.
\end{remark}
 We recall the following simple lemma. 
\begin{lemma}\label{Linear-map} Let $g, f_1,\dots, f_n$ be linear functionals on a vector space $V$ 
with respective null spaces $N, N_1,\dots, N_n$. Then $g$ is a linear combination of 
$f_1,\dots, f_n$ if and only if $N$ contains the intersection $N_1\cap\dots\cap N_n$.
\end{lemma}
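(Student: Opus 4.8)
The plan is to prove the nontrivial implication by assembling the $f_i$ into a single linear map into a finite-dimensional coordinate space and then extending functionals across a subspace. Throughout, let $K$ denote the ground field over which $V$ is a vector space.

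The easy direction is immediate: if $g = \sum_i c_i f_i$ for scalars $c_i \in K$, then any $v \in N_1 \cap \cdots \cap N_n$ satisfies $f_i(v) = 0$ for every $i$, whence $g(v) = \sum_i c_i f_i(v) = 0$, so $v \in N$. This gives $N_1 \cap \cdots \cap N_n \subseteq N$.

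For the converse, I would assemble the functionals into the linear map $T \colon V \to K^n$, $T(v) = (f_1(v), \dots, f_n(v))$, whose kernel is precisely $N_1 \cap \cdots \cap N_n$. The hypothesis $N \supseteq N_1 \cap \cdots \cap N_n = \ker T$ says exactly that $g$ vanishes on $\ker T$, so $g$ descends to a well-defined linear functional $\bar{g}$ on the quotient $V/\ker T$, which $T$ identifies with the image $W := T(V) \subseteq K^n$. Thus $\bar{g}$ may be regarded as a linear functional on the subspace $W$ of the finite-dimensional space $K^n$. The remaining step is to extend $\bar{g}$ to a linear functional $\tilde{g}$ on all of $K^n$: since $W$ is a subspace of $K^n$, one extends a basis of $W$ to a basis of $K^n$ and assigns arbitrary values off $W$, which is elementary and valid over any field. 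Writing $\tilde{g}$ in coordinates produces scalars $c_1, \dots, c_n$ with $\tilde{g}(y_1, \dots, y_n) = \sum_i c_i y_i$, and then $g(v) = \bar{g}(T(v)) = \tilde{g}(T(v)) = \sum_i c_i f_i(v)$ for every $v \in V$, i.e. $g = \sum_i c_i f_i$.

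There is no serious obstacle here, as the statement is standard linear algebra; the only point deserving a word of care is that $V$ itself need not be finite-dimensional. This causes no difficulty, however, because $T$ lands in the finite-dimensional space $K^n$, so both the descent of $g$ through $\ker T$ and the extension of $\bar{g}$ take place within $K^n$, where the basis-extension argument is automatic.
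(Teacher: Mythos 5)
Your proof is correct. Note that the paper itself gives no proof of this lemma at all --- it is introduced with ``We recall the following simple lemma'' and used as a standard fact --- so there is no argument of the paper's to compare against; your route (assembling $T(v)=(f_1(v),\dots,f_n(v))$, factoring $g$ through $\ker T$, and extending the induced functional from $T(V)$ to $K^n$) is the standard textbook proof, and you handle the one genuine point of care correctly: even when $V$ is infinite-dimensional, the descent and extension both take place inside the finite-dimensional space $K^n$, so no choice-theoretic basis extension on $V$ is ever needed.
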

The following observation, due to Meyer and Reichstein~\cite{ReichsteinI}, will play a crucial role in computing the dimension of minimal faithful representations of $p$-groups.
\begin{lemma}\label{central-span-faith} Let $H$ be a finite $p$-group and let $(\rho_i,V_i)_{1\leq i\leq n}$ be a family of irreducible representations of $H$ with  central characters $\chi_i$. Suppose  that $\{\restr{{\chi_i}}{\Omega_1(Z(H))}: 1\leq i\leq n\}$ spans $\widehat{\Omega}_1(Z(H))$. Then $\oplus_{1\leq i\leq n}\rho_i$ is a faithful representation of $H$.  
\end{lemma}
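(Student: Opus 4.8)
The plan is to reduce the faithfulness of $\bigoplus_{1\leq i\leq n}\rho_i$ on all of $H$ to the faithfulness of its restriction to the central elementary abelian subgroup $E:=\Omega_1(Z(H))$, and then to recast that as a purely linear-algebraic statement about null spaces of functionals. First I would invoke the standard fact recorded in Remark~\ref{fact=Remark}: a representation of the finite $p$-group $H$ is faithful if and only if its restriction to $\Omega_1(Z(H))$ is faithful. So it suffices to show that the restriction of $\bigoplus_i\rho_i$ to $E$ has trivial kernel. Since $E\subseteq Z(H)$ and each $\rho_i$ is irreducible, Schur's lemma shows that $\rho_i$ acts on $E$ by the scalar character $\chi_i|_E$; consequently an element $z\in E$ lies in the kernel of the restricted direct sum precisely when it lies in every $\ker(\chi_i|_E)$. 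In other words, the kernel we must control is exactly the intersection $\bigcap_{i=1}^n\ker(\chi_i|_E)$.

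Next I would translate from multiplicative characters to linear functionals. Viewing $E$ as an $\FZ$-vector space, the isomorphism~\eqref{iso-Ome} sends each central character $\chi_i|_E\in\widehat{\Omega}_1(Z(H))$ to a functional $f_i:=(\chi_i|_E)_\circ\in\Hom(E,\FZ)$, and because $\epsilon$ is injective one checks that $\ker(\chi_i|_E)$ equals the null space $N_i$ of $f_i$. Under this dictionary the hypothesis that $\{\chi_i|_E:1\leq i\leq n\}$ spans $\widehat{\Omega}_1(Z(H))$ becomes the assertion that the functionals $\{f_1,\dots,f_n\}$ span the dual space $\Hom(E,\FZ)$.

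Finally I would apply the linear algebra of Lemma~\ref{Linear-map}. Since the $f_i$ span the dual, every functional $g$ on $E$ is a linear combination of $f_1,\dots,f_n$, and hence its null space $N_g$ contains $N_1\cap\dots\cap N_n$. Because the functionals on the finite-dimensional space $E$ separate points, the intersection of all such $N_g$ is trivial, which forces $N_1\cap\dots\cap N_n=\{0\}$. Translating back, $\bigcap_i\ker(\chi_i|_E)$ is trivial, so the restriction of $\bigoplus_i\rho_i$ to $E=\Omega_1(Z(H))$ is faithful, and the reduction of the first step then yields that $\bigoplus_i\rho_i$ is faithful on $H$.

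I do not expect a serious obstacle here: the argument is a chain of clean identifications with no analytic or combinatorial difficulty. The one point demanding care is the observation in the first step that the kernel of the restricted representation is precisely the intersection of the kernels of the central characters --- rather than something strictly larger --- which rests on the fact that the irreducible $\rho_i$ act by scalars on the central subgroup $E$. Once this is in place, everything reduces to the standard duality between a spanning family of functionals and a trivial common null space, packaged conveniently by Lemma~\ref{Linear-map}.
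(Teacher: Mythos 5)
Your proof is correct and takes essentially the same route as the paper's: reduce to $\Omega_1(Z(H))$ via Remark~\ref{fact=Remark}, transfer the spanning hypothesis to $\FZ$-linear functionals via the isomorphism~\eqref{iso-Ome}, and apply Lemma~\ref{Linear-map} to conclude that $\bigcap_i \ker\restr{{\chi_i}}{\Omega_1(Z(H))}$ is trivial. You merely spell out two steps the paper leaves implicit --- the Schur's-lemma identification of the restricted kernel with that intersection, and the separating-functionals argument extracting triviality from the spanning condition --- which is fine but not a different argument.
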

\begin{proof}
Since $\{\restr{{\chi_i}}{\Omega_1(Z(H))}: 1\leq i\leq n\}$ spans $\widehat{\Omega}_{1}(Z(H))$, from Lemma~\ref{Linear-map} and the $\FZ$-isomorphism~\eqref{iso-Ome} we see that $\bigcap_{i=1}^n\ker \restr{{\chi_i}}{\Omega_1(Z(H))}=\{\1\}$. Hence $\oplus_{1\leq i\leq n}\rho_i$ is a faithful representation of $\Omega_1(Z(H))$ and so from Remark~\ref{fact=Remark}, $\oplus_{1\leq i\leq n}\rho_i$ is a faithful representation of $H$. 
\end{proof}
\begin{lemma}\label{Meyer} Let $H$ be a finite $p$-group and let $\rho$ be a faithful representation of $H$ with the minimal dimension. Then $\rho$ decomposes as a direct sum of exactly $r=d(Z(H))$ irreducible representations
\begin{equation}
\rho=\rho_1\oplus\dots\oplus\rho_r.
\end{equation}
Therefore the set of central characters $\{\restr{{\chi_i}}{\Omega_1(Z(H))}: 1\leq i\leq r\}$ is a basis for $\widehat{\Omega}_1(Z(H))$.
\end{lemma}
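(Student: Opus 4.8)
The plan is to decompose $\rho$ into irreducible summands, read off their central characters restricted to $\Omega_1(Z(H))$, and then play two constraints against each other: faithfulness, which forces these characters to \emph{span} $\widehat{\Omega}_1(Z(H))$, and minimality of the dimension, which forces them to be \emph{linearly independent}. The conclusion is then a dimension count.

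First I would write $\rho = \rho_1 \oplus \cdots \oplus \rho_m$ as a direct sum of irreducibles $(\rho_i, V_i)$ and let $\chi_i$ be the central character of $\rho_i$. By Schur's lemma the center acts on $V_i$ through the scalar $\chi_i$, so $\rho_i|_{\Omega_1(Z(H))}$ is the character $\chi_i|_{\Omega_1(Z(H))}$ repeated $\dim V_i$ times, and hence $\ker \rho_i|_{\Omega_1(Z(H))} = \ker \chi_i|_{\Omega_1(Z(H))}$. Since $\rho$ is faithful, Remark~\ref{fact=Remark} shows that $\rho|_{\Omega_1(Z(H))}$ is faithful, i.e. $\bigcap_{i=1}^m \ker \chi_i|_{\Omega_1(Z(H))} = \{\1\}$. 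Transporting each $\chi_i|_{\Omega_1(Z(H))}$ to an $\FZ$-linear functional through the isomorphism~\eqref{iso-Ome} and applying Lemma~\ref{Linear-map}, the triviality of this intersection of null spaces is equivalent to the statement that $\{\chi_i|_{\Omega_1(Z(H))} : 1 \le i \le m\}$ spans $\widehat{\Omega}_1(Z(H))$.

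Next I would pin down the dimension of the target space. Applying~\eqref{Tensor-pabelian} to the abelian $p$-group $Z(H)$ gives $\dim_{\FZ} \Omega_1(Z(H)) = d(Z(H)) = r$, and since $\widehat{\Omega}_1(Z(H))$ is canonically the $\FZ$-dual of $\Omega_1(Z(H))$ via~\eqref{iso-Ome}, it has the same dimension $r$. A spanning family of an $r$-dimensional space has at least $r$ members, so $m \ge r$. For the reverse inequality I would use minimality: if $m > r$, I can thin the spanning family to a subset indexed by some $S$ with $|S| = r$ that still spans $\widehat{\Omega}_1(Z(H))$; then Lemma~\ref{central-span-faith} shows $\bigoplus_{i \in S} \rho_i$ is already faithful, while its dimension is strictly smaller than that of $\rho$, since at least one summand of positive dimension has been discarded. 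This contradicts the minimality of $\rho$, forcing $m = r$. Finally, a spanning set of exactly $r$ vectors in an $r$-dimensional space is a basis, which yields the concluding assertion.

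The one point demanding care is the translation carried out in the second paragraph: one must pass correctly between the kernels of the scalar characters $\chi_i|_{\Omega_1(Z(H))}$ on the group $\Omega_1(Z(H))$, their avatars as $\FZ$-linear functionals under~\eqref{iso-Ome}, and the null-space criterion of Lemma~\ref{Linear-map}, in order to recognize that faithfulness is \emph{exactly} the spanning condition. Once this dictionary is set up, both inequalities $m \ge r$ and $m \le r$ reduce to short dimension-counting arguments, so I do not anticipate any further obstacle.
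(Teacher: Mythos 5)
Your proposal is correct and takes essentially the same route as the paper's proof: both decompose $\rho$ into irreducibles, translate faithfulness via Remark~\ref{fact=Remark}, the isomorphism~\eqref{iso-Ome} and Lemma~\ref{Linear-map} into the statement that the restricted central characters span $\widehat{\Omega}_1(Z(H))$, and then combine Lemma~\ref{central-span-faith} with minimality of $\dim(\rho)$ to force the number of summands to equal $r=d(Z(H))$ and the spanning set to be a basis. The only difference is expository: you make explicit the dimension count $\dim_{\FZ}\widehat{\Omega}_1(Z(H))=r$ and the thinning-to-a-spanning-subset contradiction that the paper compresses into a single sentence.
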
 
\begin{proof}
Let $\rho=\oplus_{1\leq i\leq n}\rho_i$
be the decomposition of $\rho$ with central characters $\chi_i$, $1\leq i\leq n$. Since $\rho$ is a faithful representation and $r=\rank(Z(H))$, we have $n\geq r$. Furthermore $\bigcap_{i=1}^n\ker\chi_i=\{\1\}$, since $\rho$ is faithful. Hence from Lemma~\ref{Linear-map}, Lemma~\ref{central-span-faith} and minimality of $\dim(\rho)$ we conclude that $n=r$ and $\{\restr{{\chi_i}}{\Omega_1(Z(H))}: 1\leq i\leq r\}$ is a basis for $\widehat{\Omega}_1(Z(H))$.
\end{proof}
As an immediate application of Lemma~\ref{Meyer}, we obtain the following result which provides an upper bound for the minimal dimension of a faithful representation of any $p$-group. This upper bound is sharp for $\Hei_{2k+1}(\Z{p^n})$ according to Theorem~\ref{Heisenberg} in the special case $\mathcal{O}=\ZZ_p$.  
\begin{corollary}\label{upper-pgroups} Let $H$ be a finite $p$-group with center $Z(H)$. Let $A$ be a maximal abelian subgroup of $H$. Then $\mff(H)\leq \mff(Z(H))[H:A]$.
\end{corollary}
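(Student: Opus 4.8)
The plan is to construct an explicit faithful representation of $H$ of dimension $\mff(Z(H))[H:A]$ by inducing one-dimensional characters from the maximal abelian subgroup $A$. The first step I would record is the elementary but essential fact that $Z(H)\subseteq A$: if $z\in Z(H)$, then $\langle A,z\rangle$ is abelian, since $z$ commutes with every element of $H$ and $A$ is abelian, and it contains $A$, so maximality of $A$ forces $z\in A$. In particular $\Omega_1(Z(H))\leq A$, which is what makes the induced modules interact well with the center.

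Next I set $r=d(Z(H))=\mff(Z(H))$, which by~\eqref{Tensor-pabelian} equals $\dim_{\FZ}\widehat{\Omega}_1(Z(H))$. Because restriction of characters along the inclusions $\Omega_1(Z(H))\leq Z(H)\leq A$ of finite abelian groups gives surjections $\widehat{A}\twoheadrightarrow\widehat{Z(H)}\twoheadrightarrow\widehat{\Omega}_1(Z(H))$, I can choose one-dimensional characters $\chi_1,\dots,\chi_r$ of $A$ whose restrictions $\restr{\chi_i}{\Omega_1(Z(H))}$ form a basis of $\widehat{\Omega}_1(Z(H))$. I then form
\[
\rho=\bigoplus_{i=1}^r\Ind_A^H\chi_i ,
\]
whose dimension is $\sum_{i=1}^r[H:A]=r[H:A]=\mff(Z(H))[H:A]$, since each $\chi_i$ is one-dimensional.

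The key verification is that $\rho$ is faithful. Working in the model $\Ind_A^H\chi_i=\CC[H]\otimes_{\CC[A]}\chi_i$, for any $z\in Z(H)\subseteq A$ and any generator $g\otimes v$ one computes $z\cdot(g\otimes v)=(gz)\otimes v=g\otimes\chi_i(z)v=\chi_i(z)(g\otimes v)$, using that $z$ is central and lies in $A$. Hence $Z(H)$ acts on $\Ind_A^H\chi_i$ by the scalar $\chi_i(z)$. Decomposing each $\Ind_A^H\chi_i$ into irreducible constituents, Schur's lemma shows every constituent has central character restricting to $\restr{\chi_i}{Z(H)}$ on $Z(H)$. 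Thus $\rho$ is a direct sum of irreducible representations whose central characters, restricted to $\Omega_1(Z(H))$, include the spanning family $\{\restr{\chi_i}{\Omega_1(Z(H))}\}$, so Lemma~\ref{central-span-faith} gives that $\rho$ is faithful, and therefore $\mff(H)\leq\dim\rho=\mff(Z(H))[H:A]$.

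The argument is essentially routine once the two observations are in place: that $A\supseteq Z(H)$, and that induction from a subgroup containing the center leaves the center acting by scalars. The only mild subtlety I anticipate is that the induced modules $\Ind_A^H\chi_i$ need not be irreducible, so I must pass to their irreducible constituents before invoking Lemma~\ref{central-span-faith}; this is precisely where the scalar-action computation for central elements is needed, as it guarantees that each constituent carries the intended central character.
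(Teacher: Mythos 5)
Your proof is correct, but it runs in the opposite direction from the paper's. The paper's proof is non-constructive and very short: it takes a faithful representation $\rho$ of minimal dimension, invokes Lemma~\ref{Meyer} to conclude that $\rho$ splits into exactly $r=\mff(Z(H))$ irreducible summands, and then bounds each summand by the classical fact that every irreducible representation of $H$ has dimension at most $[H:A]$ when $A$ is abelian (Serre, \S 3.1, Corollary). You instead exhibit an explicit faithful representation of the stated dimension by choosing characters $\chi_1,\dots,\chi_r$ of $A$ whose restrictions form a basis of $\widehat{\Omega}_1(Z(H))$ (legitimate, since $Z(H)\subseteq A$ by maximality and restriction of characters of finite abelian groups is surjective) and inducing each to $H$. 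Your computation that $z\in Z(H)$ acts on $\Ind_A^H\chi_i$ by the scalar $\chi_i(z)$ is the crucial point and is correct, and you rightly noticed that Lemma~\ref{central-span-faith} is stated for irreducible representations, hence your passage to constituents; in fact you could shortcut that step, since the scalar action already gives $\ker\bigl(\restr{\rho}{\Omega_1(Z(H))}\bigr)=\bigcap_{i=1}^r\ker\restr{\chi_i}{\Omega_1(Z(H))}=\{\1\}$ by Lemma~\ref{Linear-map}, and Remark~\ref{fact=Remark} then yields faithfulness directly. As for what each approach buys: the paper's argument is three lines but leans on Lemma~\ref{Meyer} and the cited dimension bound for irreducibles; yours is longer but self-contained (needing neither of those inputs) and is essentially the same constructive induced-character technique the paper itself deploys for Corollary~\ref{Upper-cyclic-comm} and Lemma~\ref{cons-faith}, so it fits naturally into the paper's toolkit.
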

\begin{proof} Let $\rho$ be a faithful representation with minimal dimension. Then we have the irreducible decomposition $\rho=\rho_1\oplus\dots\oplus\rho_r$, where $r=\rank(Z(H))=\mff(Z(H))$. The inequality will now follow from the fact that the dimension of any irreducible representation of $H$ is at most $[H:A]$ (see~\cite{Serre}, $\S$3.1, Corollary). 
\end{proof}

The following technical lemma will be used in the proof of Theorem~\ref{2-step}.
\begin{lemma}\label{r-1char} Let $C$ be an abelian $p$-group with a  cyclic subgroup $B$. Then $C$ has $r=d(C)$ one-dimensional representations $\chi_1,\dots,\chi_r$ such that $\ker(\chi_1)\cap B=\{\1\}$, $B\subseteq \bigcap_{i=2}^r\ker(\chi_i)$ and $\chi_1\oplus \chi_{2} \oplus \cdots\oplus\chi_r$ is a faithful representation of $C$.
\end{lemma}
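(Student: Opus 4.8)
The plan is to reduce the statement to a linear-algebra problem over $\FZ$ inside $\Omega_1(C)$ and then to build the required characters by lifting. Since $C$ is abelian we have $Z(C)=C$, so by Remark~\ref{fact=Remark} a direct sum of one-dimensional characters of $C$ is faithful if and only if its restriction to $\Omega_1(C)$ is faithful; via the isomorphism~\eqref{iso-Omei} this happens exactly when the restrictions $\restr{{\chi_i}}{\Omega_1(C)}$, viewed as elements of $\Hom(\Omega_1(C),\FZ)$, span this $r$-dimensional $\FZ$-vector space (here $r=d(C)=\dim_{\FZ}\Omega_1(C)$), equivalently when they form a basis. Because $B$ is cyclic, $\Omega_1(B)=\Omega_1(C)\cap B$ is a line in $\Omega_1(C)$; fix a nonzero vector $v$ spanning it. A character of the cyclic group $B$ is faithful precisely when it is non-trivial on $\Omega_1(B)$, so the condition $\ker(\chi_1)\cap B=\{\1\}$ translates into $\ell_1(v)\neq 0$, where $\ell_1=\restr{{\chi_1}}{\Omega_1(C)}$, while $B\subseteq\ker(\chi_i)$ forces $\chi_i$ to be a character of $C/B$.

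First I would produce $\chi_1$: restriction of characters along the inclusion $B\hookrightarrow C$ is surjective, as $\CC^\ast$ is divisible, so I may choose $\chi_1\in\widehat{C}$ whose restriction to $B$ is a faithful character of $B$. This guarantees $\ker(\chi_1)\cap B=\{\1\}$ and $\ell_1(v)\neq 0$, so in particular $\ell_1\notin v^{\perp}$, the hyperplane of functionals vanishing at $v$.

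The key step is to show that every functional in $v^{\perp}$ arises as $\restr{{\chi}}{\Omega_1(C)}$ for some character $\chi$ of $C$ trivial on $B$. For this, note that the map $\Omega_1(C)\to C/B$, $x\mapsto xB$, has kernel $\Omega_1(C)\cap B=\Omega_1(B)$, and hence induces an embedding $\Omega_1(C)/\Omega_1(B)\hookrightarrow C/B$. Given $\ell\in v^{\perp}$, that is $\ell$ trivial on $\Omega_1(B)$, it factors through $\Omega_1(C)/\Omega_1(B)$; transporting it to a character of this subgroup and extending, again by surjectivity of restriction of characters, to a character of $C/B$, then pulling back to $C$, yields a character $\chi$ trivial on $B$ with $\restr{{\chi}}{\Omega_1(C)}=\ell$. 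Thus the characters of $C$ trivial on $B$ restrict onto the full $(r-1)$-dimensional space $v^{\perp}$. This surjectivity assertion is the main obstacle; everything else is a routine lifting argument together with the dictionary between faithful representations and spanning sets of functionals.

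Finally I would choose $\ell_2,\dots,\ell_r$ to be any basis of $v^{\perp}$ and lift each to a character $\chi_i$ ($2\leq i\leq r$) of $C$ trivial on $B$ by the previous step. Since $\ell_1\notin v^{\perp}=\langle\ell_2,\dots,\ell_r\rangle$, the functionals $\ell_1,\dots,\ell_r$ form a basis of $\Hom(\Omega_1(C),\FZ)$, so by Lemma~\ref{central-span-faith} applied with $H=C$ the representation $\chi_1\oplus\cdots\oplus\chi_r$ is faithful, while by construction $\ker(\chi_1)\cap B=\{\1\}$ and $B\subseteq\bigcap_{i=2}^r\ker(\chi_i)$. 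The degenerate case $B=\{\1\}$ is immediate, since then the two kernel conditions are vacuous and one only needs a faithful sum of $r$ one-dimensional characters, which exists because $\mff(C)=d(C)=r$.
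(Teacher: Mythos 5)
Your proof is correct, but it takes a genuinely different route from the paper's. Both arguments begin identically: since $\CC^*$ is divisible, restriction $\widehat{C}\to\widehat{B}$ is surjective, so one can pick $\chi_1$ whose restriction to $B$ is faithful. From there the paper stays group-theoretic: it sets $K=\ker(\chi_1)$, argues via the exact-sequence inequality~\eqref{d(A)} that $\rank(K)\in\{r-1,r\}$ because $C/K$ is cyclic, rules out $\rank(K)=r$ using the order-$p$ element of $B$, concludes $\rank(KB/B)=r-1$ (note $KB/B\cong K$ since $K\cap B=\{\1\}$), takes $r-1$ characters giving a faithful representation of $KB/B$, and extends them through $C/B$ to $C$; the final faithfulness of $\chi_1\oplus\cdots\oplus\chi_r$ is then asserted, and verifying it requires a short chase (any element of $\bigcap_i\ker\chi_i$ lies in $K$, maps into the trivial coset of $KB/B$, hence lies in $K\cap B=\{\1\}$). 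You instead linearize everything at once: you work in the dual $\FZ$-vector space $\Hom(\Omega_1(C),\FZ)$ via~\eqref{iso-Omei}, observe that $\Omega_1(B)=\Omega_1(C)\cap B$ is a line spanned by $v$, and prove the key surjectivity statement that the characters of $C$ trivial on $B$ restrict \emph{onto} the full hyperplane $v^{\perp}$ (via the embedding $\Omega_1(C)/\Omega_1(B)\hookrightarrow C/B$ and extension by divisibility); choosing a basis of $v^{\perp}$ together with $\ell_1\notin v^{\perp}$ then yields a basis of the dual, and faithfulness follows immediately from Lemma~\ref{central-span-faith} rather than by an ad hoc kernel computation. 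What each approach buys: the paper's argument avoids your surjectivity claim and needs only the rank bookkeeping for $K$, while yours eliminates the rank argument entirely, makes the faithfulness verification automatic by appeal to an already-established lemma, and isolates a slightly sharper structural fact (the image of $\widehat{C/B}$ in $\Hom(\Omega_1(C),\FZ)$ is exactly $v^{\perp}$) that is in the same spirit as the level-counting arguments used later in the proof of Theorem~\ref{Heisenberg}. Your handling of the degenerate cases ($B=\{\1\}$ via $\mff(C)=d(C)=r$, and $r=1$ absorbed into the basis argument) is also complete.
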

\begin{proof}
Assume $r\geq 2$ and $B \neq \{ \1 \}$, as otherwise the lemma is obvious. Since $B$ is a cyclic subgroup and the restriction map $\widehat{C}\to \widehat{B}$ is surjective, $C$ admits a one-dimensional representation $\chi_1$ such that $\ker(\chi_1)\cap B=\{\1\}$. Set $K=\ker(\chi_1)$. We show that $\rank(K)=r-1$. In fact, since $C/K$ is a cyclic group, $\rank(K)$ is either $r$ or $r-1$. In the case $\rank(K)=r$, the natural $\FZ$-linear map $i: \Omega_1(K)\hookrightarrow \Omega_1(C)$ must be an isomorphism, which is impossible since $K\cap B=\{\1\}$ and $B$ has a non-trivial element of order $p$. Therefore $\rank(KB/B)=r-1$. Pick $r-1$ one-dimensional representations $\chi'_2,\dots,\chi'_{r}$ of $KB/B$ such that $\chi'_2\oplus\dots\oplus\chi'_r$ is a faithful representation of $KB/B$ and extend them to $C/B$. Let $\chi_2,\dots,\chi_r$ be the corresponding one-dimensional representations of $C$. The representations $\chi_1,\dots,\chi_r$ satisfy our desired conditions. 
\end{proof}
Using this lemma we prove the following result.
\begin{corollary}\label{Upper-cyclic-comm} Let $H$ be a finite two-step nilpotent $p$-group with a cyclic commutator subgroup. Let $A$ be a maximal abelian subgroup of $H$. Then $\mff(H)\leq [H:A]+\mff(Z(H))-1$.
\end{corollary}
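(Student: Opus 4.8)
The plan is to build a faithful representation of $H$ of dimension $[H:A]+\mff(Z(H))-1$ by combining one possibly large irreducible representation with $r-1$ linear characters, where $r=d(Z(H))=\mff(Z(H))$. The engine is Lemma~\ref{r-1char} applied to the abelian $p$-group $C=Z(H)$ together with its cyclic subgroup $B=[H,H]$; recall that $H$ is two-step nilpotent, so $[H,H]\subseteq Z(H)$, and $[H,H]$ is cyclic by hypothesis. This produces one-dimensional characters $\chi_1,\dots,\chi_r$ of $Z(H)$ with $\ker(\chi_1)\cap [H,H]=\{\1\}$, with $[H,H]\subseteq\bigcap_{i=2}^r\ker(\chi_i)$, and with $\chi_1\oplus\cdots\oplus\chi_r$ faithful on $Z(H)$.

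First I would realize $\chi_2,\dots,\chi_r$ as linear characters of $H$ itself. Since each such $\chi_i$ is trivial on $[H,H]$, it factors through $Z(H)/[H,H]$, a subgroup of the abelianization $H/[H,H]$; because the restriction map on Pontryagin duals of abelian groups is surjective, $\chi_i$ extends to a character of $H/[H,H]$, which I pull back to a one-dimensional representation $\tilde\chi_i$ of $H$ whose central character is exactly $\chi_i$. Next I would produce a single irreducible representation $\rho_1$ of $H$ with central character $\chi_1$. For this, induce $\chi_1$ from the central subgroup $Z(H)$ up to $H$: since $Z(H)$ is central, $\Ind_{Z(H)}^H\chi_1$ has $Z(H)$ acting by the scalar $\chi_1$, so every irreducible constituent has central character $\chi_1$; choosing any such constituent gives $\rho_1$. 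By Corollary~\ref{upper-pgroups}, every irreducible representation of $H$ has dimension at most $[H:A]$, hence $\dim\rho_1\leq[H:A]$.

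Finally I would assemble $\rho=\rho_1\oplus\tilde\chi_2\oplus\cdots\oplus\tilde\chi_r$. Its central characters are $\chi_1,\dots,\chi_r$, and their restrictions to $\Omega_1(Z(H))$ span $\widehat{\Omega}_1(Z(H))$: they constitute $r$ functionals with trivial common kernel on the $r$-dimensional $\FZ$-vector space $\Omega_1(Z(H))$ (trivial common kernel because $\chi_1\oplus\cdots\oplus\chi_r$ is faithful on $Z(H)$, and $r=d(Z(H))=\dim_{\FZ}\Omega_1(Z(H))$), so by the dimension count they form a basis. Lemma~\ref{central-span-faith} then makes $\rho$ faithful, and $\dim\rho=\dim\rho_1+(r-1)\leq[H:A]+\mff(Z(H))-1$, which is the claimed bound.

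The only genuinely delicate point is the existence of an irreducible representation with the prescribed central character $\chi_1$ and controlled dimension; everything else is bookkeeping with the two cited lemmas. The induction-from-the-center argument resolves this cleanly, since centrality forces the inducing character to be the common central character of all constituents, while the dimension bound is handed to us by Corollary~\ref{upper-pgroups}. The design choice that makes the whole argument work is feeding $B=[H,H]$ into Lemma~\ref{r-1char}, so that $r-1$ of the characters kill the commutator subgroup and descend to linear characters of $H$, leaving only one character that may require a higher-dimensional representation.
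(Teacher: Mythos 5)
Your proof is correct and is essentially the paper's own argument: both feed $B=[H,H]$ into Lemma~\ref{r-1char}, lift $\chi_2,\dots,\chi_r$ to linear characters of $H$ through the abelianization, and complete the direct sum with a single representation of central character $\chi_1$ and dimension at most $[H:A]$ --- the paper takes $\Ind_A^H$ of an extension of $\chi_1$ to the maximal abelian subgroup $A$ (dimension exactly $[H:A]$, faithfulness checked directly on $Z(H)$), while you take an irreducible constituent of $\Ind_{Z(H)}^{H}\chi_1$ and conclude via Lemma~\ref{central-span-faith}, an equally valid variant that avoids choosing an extension of $\chi_1$ to $A$. The only slip is bibliographic: the bound $\dim\rho_1\le[H:A]$ for irreducible representations is the fact from \cite[\S 3.1, Corollary]{Serre} invoked \emph{inside} the proof of Corollary~\ref{upper-pgroups}, not the statement of that corollary (which reads $\mff(H)\le\mff(Z(H))[H:A]$), so it should be cited directly.
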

\begin{proof} Let $r=\rank(Z(H))=\mff(Z(H))$. Since $[H,H]$ is a cyclic subgroup of 
$Z(H)$, by Lemma~\ref{r-1char}, we can find $r$ one-dimensional representations $\chi_1,\dots,\chi_r$ of $Z(H)$ such that $\ker(\chi_1)\cap [H,H]=\{\1\}$ and 
$ \chi_{2}, \dots,  \chi_{r}$ vanish on $[H,H]$. Hence each $ \chi_i$, $2 \le i \le r$, defines a representation of $Z(H)/[H,H]$ which can then be extended to a representation of $H/[H,H]$ and consequently to a one-dimensional representation $\overline{\chi}_i$, $2 \le i \le r$ of $H$. Let $ \overline{ \chi}_{1}$ be also an extension of $ \chi_{1}$ to a character of $A$.  We now claim that 
\[  \rho=\mathrm{Ind}_A^H( \overline{ \chi}_{1})\oplus \overline{ \chi}_{2}\oplus\dots\oplus \overline{ \chi}_{r}, \] 
is a faithful representation of $H$ of dimension $[H:A]+r-1$. The faithfulness follows from the fact that the restriction of $ \rho$ to $Z(H)$ is faithful.  
\end{proof}
%=======================================================
\subsection{Stone--von~Neumann Theorem}
Let us first recall a version of the Stone--von~Neumann Theorem that will be used in this paper. It is worth mentioning that the Stone-von Neumann Theorem holds in a much broader setting~\cite{Howe}, but this more general theorem will not be needed here.

Let $H$ be a finite two-step nilpotent group. If $A$ is any subgroup of $H$
containing $Z(H)$, we will denote $\bar{A}:= A/Z(H)$. For $x\in H$, we will similarly denote its image in $H/Z(H)$ by
$\bar x$. Notice that any subgroup of $H$ containing $Z(H)$ is a normal subgroup. Let $\chi$ be a one-dimensional representation of $Z(H)$. This defines a skew-symmetric bilinear form on $\bar{H}$ given by 
\begin{equation}\label{symplectic-module}
\langle \bar{x},\bar{y}\rangle:=\chi\left([x,y]\right).
\end{equation}
$\chi$ is called {\it generic} if the above pairing is non-degenerate, that is, if every character of $\bar{H}$ has the form $\bar{x}\mapsto \langle \bar{x},\bar{y}\rangle$, for a unique $\bar{y}\in\bar{H}$. Assuming $\chi$ is generic, we say that a subgroup $A\leq H$ is {\it isotropic} if $\bar{A}\subseteq \bar{A}^\perp$ where $\bar{A}^\perp=\{\bar{x}\in \bar{H}: \langle \bar{x},\bar{a}\rangle=0,\, \forall \bar{a}\in \bar{A}\}$. We say that $A$ is {\it polarizing} if $\bar{A} =\bar{A}^\perp$. For the proof of the following theorem we refer the reader to~\cite[$\S$4.1]{Bump}.
\begin{theorem}[Stone--von~Neumann Theorem]\label{SV} Let $H$ be a finite two-step nilpotent group, and let $\chi$ be a generic character of its center $Z(H)$. Then
there exists a unique isomorphism class of irreducible representations of $H$ with central character $\chi$. Such a representation may be constructed as follows:
choose any polarizing subgroup $A$ of $H$, and let $\tilde{\chi}$ be any extension of $\chi$ to $A$.
Then $\mathrm{Ind}_A^H(\tilde{\chi})$ will be such a representation. 

\end{theorem}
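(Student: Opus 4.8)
The plan is to establish the two assertions separately: the construction (the induced module is an irreducible representation with central character $\chi$) and the uniqueness of its isomorphism class. I would begin with the standing reductions. Since $H$ is two-step nilpotent, $[H,H]\subseteq Z(H)$, so the pairing $\langle\bar x,\bar y\rangle=\chi([x,y])$ from~\eqref{symplectic-module} is a well-defined alternating form on $\bar H=H/Z(H)$, and genericity of $\chi$ is exactly its non-degeneracy. As already noted before the theorem, every subgroup $A\supseteq Z(H)$ is normal in $H$. If $A$ is polarizing, then $\bar A=\bar A^\perp$, and non-degeneracy yields $\bar H/\bar A^\perp\cong\widehat{\bar A}$, whence $|\bar A|^2=|\bar H|$ and $[H:A]=[\bar H:\bar A]=\sqrt{[H:Z(H)]}$; in particular $\dim\Ind_A^H(\tilde\chi)=[H:A]=\sqrt{[H:Z(H)]}$. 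I would also record why an extension $\tilde\chi$ exists: as $A$ is isotropic, $\chi$ is trivial on $[A,A]\subseteq Z(H)$, so it descends to a character of $Z(H)/[A,A]\leq A/[A,A]$, which extends over the finite abelian group $A/[A,A]$ and pulls back to $\tilde\chi$. Finally, $Z(H)$ acts on $\Ind_A^H(\tilde\chi)$ by the scalar $\chi$, so every irreducible constituent has central character $\chi$.

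For irreducibility of $\pi_0:=\Ind_A^H(\tilde\chi)$, I would compute $\langle\pi_0,\pi_0\rangle_H$ by Frobenius reciprocity and the Mackey restriction formula. Since $A$ is normal, the double cosets $A\backslash H/A$ reduce to the cosets $H/A$, and $\mathrm{Res}_A\pi_0=\bigoplus_{gA\in H/A}\tilde\chi^{\,g}$, where $\tilde\chi^{\,g}(a)=\tilde\chi(g^{-1}ag)$. Using the identity $g^{-1}ag=a\,[a^{-1},g^{-1}]$ with $[a^{-1},g^{-1}]\in Z(H)$, one gets $\tilde\chi^{\,g}(a)=\tilde\chi(a)\,\chi([a^{-1},g^{-1}])$, and $\chi([a^{-1},g^{-1}])=\langle\bar a,\bar g\rangle$ by bilinearity. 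Hence $\tilde\chi^{\,g}=\tilde\chi$ iff $\langle\bar a,\bar g\rangle$ is trivial for all $\bar a\in\bar A$, i.e. iff $\bar g\in\bar A^\perp=\bar A$, i.e. iff $g\in A$. Thus exactly the trivial coset contributes, giving $\langle\pi_0,\pi_0\rangle_H=\langle\tilde\chi,\mathrm{Res}_A\pi_0\rangle_A=1$, so $\pi_0$ is irreducible with central character $\chi$. (Existence of a polarizing $A$ in the first place follows from the standard construction of a Lagrangian subgroup for a non-degenerate alternating form on a finite abelian group, by enlarging a maximal isotropic subgroup.)

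For uniqueness I would argue by a dimension count using the central idempotent $e_\chi=\frac{1}{|Z(H)|}\sum_{z\in Z(H)}\overline{\chi(z)}\,z\in\CC[H]$. By the Wedderburn decomposition, $\CC[H]e_\chi$ is the product of the matrix blocks attached to those irreducibles having central character $\chi$, so $\dim_\CC\CC[H]e_\chi=\sum_{\pi\in\Irr(H\mid\chi)}(\dim\pi)^2$. Computing the same dimension as the trace of right multiplication by $e_\chi$ on the basis $\{h\}_{h\in H}$, only the $z=\1$ term contributes to each diagonal entry, so $\dim_\CC\CC[H]e_\chi=|H|/|Z(H)|=[H:Z(H)]$. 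Since $[H:Z(H)]=(\dim\pi_0)^2$ and $\pi_0$ already accounts for the full amount $(\dim\pi_0)^2$, no second irreducible can occur; therefore $\pi_0$ is, up to isomorphism, the unique irreducible representation of $H$ with central character $\chi$.

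The main obstacle is the irreducibility step: everything rests on converting the polarizing condition $\bar A=\bar A^\perp$ into the assertion that the conjugate characters $\tilde\chi^{\,g}$ are distinct from $\tilde\chi$ for all $g\notin A$, which requires the commutator bookkeeping $g^{-1}ag=a[a^{-1},g^{-1}]$ together with non-degeneracy of the form. Once irreducibility and the dimension $\sqrt{[H:Z(H)]}$ are secured, uniqueness is a formal consequence of the idempotent count.
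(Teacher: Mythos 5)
Your proof is correct and complete. Note that the paper itself contains no proof of Theorem~\ref{SV}; it refers to \cite[\S 4.1]{Bump}, so the relevant comparison is with that standard treatment. Your construction-and-irreducibility step follows the classical route: Mackey restriction along the normal subgroup $A$, the commutator identity $g^{-1}ag=a[a^{-1},g^{-1}]$ with $[a^{-1},g^{-1}]\in Z(H)$, and the polarizing condition $\bar{A}=\bar{A}^{\perp}$ to see that only the trivial coset contributes to $\langle \pi_0,\pi_0\rangle_H$. Where you deviate from the textbook account is the uniqueness step: instead of classifying or counting all irreducible representations of the group (the usual bookkeeping in treatments like Bump's, carried out for Heisenberg groups), you compute $\dim_{\CC}\CC[H]e_{\chi}=[H:Z(H)]$ for the central idempotent $e_{\chi}$ and observe that the single Wedderburn block of $\pi_0$, of size $(\dim\pi_0)^2=[H:A]^2=[\bar{H}:\bar{A}]^2=|\bar{A}|^2=[H:Z(H)]$, already exhausts it. This is a clean substitute that works verbatim for an arbitrary finite two-step nilpotent group, which is precisely the generality in which the paper states the theorem (whereas the cited source treats the Heisenberg case). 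You also correctly fill in two points the theorem statement quietly assumes: existence of a polarizing subgroup (a maximal isotropic subgroup of $\bar{H}$ is Lagrangian, since the form is alternating and genericity gives non-degeneracy) and extendability of $\chi$ to $A$ (through the abelian quotient $A/[A,A]$, using $[A,A]\subseteq Z(H)$ and isotropy); and your use of the paper's genericity definition, via surjectivity of $\widehat{\bar{H}}\to\widehat{\bar{A}}$, to get $[\bar{H}:\bar{A}^{\perp}]=|\widehat{\bar{A}}|$ and hence $\dim\pi_0=\sqrt{[H:Z(H)]}$ is sound. No gaps.
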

Now let $\chi$ be a one-dimensional representation of $Z(H)$ such that $\ker(\chi)\cap [H,H]=\{\bf 1\}$. Then it is easy to see that $\chi$ is generic and any maximal abelian subgroup of $H$ is a polarizing subgroup. Therefore we have the following corollary of the Stone--von~Neumann Theorem. 
\begin{corollary}\label{cor-SV} Let $H$ be a finite two-step nilpotent group with center $Z(H)$, and let $\chi$ be a one-dimensional representation of $Z(H)$ such that $\ker(\chi) \cap [H, H] = \{\mathbf{1}\}$. Let $\tilde{\chi}$ be any extension of $\chi$ to a maximal abelian subgroup $A$. Then, up to isomorphism $\Ind_{A}^H(\tilde{\chi})$ is the unique irreducible representation of $H$ with central character $\chi$.
\end{corollary}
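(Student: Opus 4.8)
The plan is to deduce the statement directly from the Stone--von~Neumann Theorem (Theorem~\ref{SV}) by checking its two hypotheses for the given character $\chi$: first, that $\chi$ is generic; second, that any maximal abelian subgroup $A$ is polarizing. Throughout I use that $H$ has nilpotency class at most two, so $[H,H]\subseteq Z(H)$ and the commutator map is bimultiplicative in each variable; consequently the pairing $\langle\bar{x},\bar{y}\rangle=\chi([x,y])$ on $\bar{H}=H/Z(H)$ from~\eqref{symplectic-module} is a well-defined skew-symmetric bilinear form. The single observation driving both verifications is that the hypothesis $\ker(\chi)\cap[H,H]=\{\mathbf{1}\}$ upgrades the a priori weak conclusion ``$\chi$ annihilates a commutator'' to the strong conclusion ``that commutator is trivial,'' since every commutator already lies in $[H,H]$.

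To see that $\chi$ is generic, I would compute the radical of the pairing. If $\bar{x}$ lies in the radical, then $\chi([x,y])=1$ for every $y\in H$; but $[x,y]\in\ker(\chi)\cap[H,H]=\{\mathbf{1}\}$, so $[x,y]=\mathbf{1}$ for all $y$, forcing $x\in Z(H)$ and hence $\bar{x}=\bar{\mathbf{1}}$. Thus the radical is trivial, and since $\bar{H}$ is finite the homomorphism $\bar{y}\mapsto\langle\,\cdot\,,\bar{y}\rangle$ from $\bar{H}$ to $\widehat{\bar{H}}$ is injective, hence bijective, which is precisely the genericity condition.

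Next I would check that a maximal abelian subgroup $A$ is polarizing. First, $A\supseteq Z(H)$: the subgroup $AZ(H)$ is abelian and contains $A$, so maximality gives $Z(H)\subseteq A$, and $\bar{A}$ is defined. Isotropy $\bar{A}\subseteq\bar{A}^\perp$ is immediate, since for $a,a'\in A$ we have $[a,a']=\mathbf{1}$ and hence $\langle\bar{a},\bar{a}'\rangle=1$. For the reverse inclusion, take $\bar{x}\in\bar{A}^\perp$, so that $\chi([x,a])=1$ for all $a\in A$; once more $[x,a]\in\ker(\chi)\cap[H,H]=\{\mathbf{1}\}$, so $x$ centralizes $A$. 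Then $\langle A,x\rangle$ is abelian, and maximality of $A$ forces $x\in A$, i.e.\ $\bar{x}\in\bar{A}$. Hence $\bar{A}=\bar{A}^\perp$, and $A$ is polarizing.

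With both hypotheses in hand, Theorem~\ref{SV} applies verbatim: for any extension $\tilde{\chi}$ of $\chi$ to the polarizing subgroup $A$, the induced representation $\Ind_A^H(\tilde{\chi})$ is, up to isomorphism, the unique irreducible representation of $H$ with central character $\chi$. No step here is genuinely hard once the key observation is isolated; the only point that requires care is the reverse inclusion $\bar{A}^\perp\subseteq\bar{A}$, where one must invoke the maximality of $A$ and the triviality of $\ker(\chi)\cap[H,H]$ simultaneously, as dropping either property would break the argument.
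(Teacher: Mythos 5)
Your proof is correct and takes essentially the same route as the paper: the paper deduces the corollary from Theorem~\ref{SV} by observing (with the details left as ``easy to see'') that the hypothesis $\ker(\chi)\cap[H,H]=\{\mathbf{1}\}$ makes $\chi$ generic and makes every maximal abelian subgroup polarizing. Your write-up merely supplies those two verifications explicitly, exactly as intended.
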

We also need the following lemma:
\begin{lemma}\label{max-[H:A]} Let $H$ be a finite two-step nilpotent group such that $[H,H]$ is cyclic. Let $A$ be a maximal abelian subgroup  of $H$. Then 
$$
\sqrt{[H:Z(H)]}=[H:A].
$$ 
\end{lemma}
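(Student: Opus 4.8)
The plan is to put on $\bar{H} = H/Z(H)$ the symplectic structure coming from the commutator pairing, to identify maximal abelian subgroups with Lagrangian (polarizing) subgroups, and then to read off the index $[H:A]$ from Pontryagin duality. The cyclicity of $[H,H]$ enters only to guarantee the existence of a generic character, which is what makes the pairing non-degenerate.

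First I would choose a one-dimensional character $\chi$ of $Z(H)$ with $\ker(\chi)\cap[H,H]=\{\1\}$. Such a $\chi$ exists because $[H,H]$ is cyclic, so it admits a faithful character, and the restriction map $\widehat{Z(H)}\to\widehat{[H,H]}$ is surjective (as $\CC^*$ is divisible). By the discussion preceding Corollary~\ref{cor-SV}, this $\chi$ is generic, so the skew form $\langle\bar{x},\bar{y}\rangle=\chi([x,y])$ on $\bar{H}$ from~\eqref{symplectic-module} is non-degenerate and every maximal abelian subgroup $A$ of $H$ is polarizing, i.e. $\bar{A}=\bar{A}^\perp$. I would also record that $Z(H)\subseteq A$: indeed $A\cdot Z(H)$ is an abelian subgroup containing $A$, so maximality forces $A\cdot Z(H)=A$. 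Consequently $\bar{A}=A/Z(H)$ is defined and $[H:A]=|\bar{H}|/|\bar{A}|$.

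Next I would compute $|\bar{A}^\perp|$. Genericity says exactly that the map $\Phi\colon\bar{H}\to\widehat{\bar{H}}$, $\bar{y}\mapsto\langle\cdot,\bar{y}\rangle$, is an isomorphism. Under $\Phi$, and using the skew-symmetry of the form, the subgroup $\bar{A}^\perp$ corresponds to the annihilator $\{\phi\in\widehat{\bar{H}}:\phi|_{\bar{A}}=\1\}$, which is canonically isomorphic to $\widehat{\bar{H}/\bar{A}}$ and hence has order $|\bar{H}|/|\bar{A}|$. Thus $|\bar{A}^\perp|=|\bar{H}|/|\bar{A}|$. Feeding in the polarizing identity $\bar{A}=\bar{A}^\perp$ gives $|\bar{A}|^2=|\bar{H}|=[H:Z(H)]$, and therefore $[H:A]=|\bar{H}|/|\bar{A}|=|\bar{A}|=\sqrt{[H:Z(H)]}$, as desired.

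The only step demanding genuine care is the order count $|\bar{A}^\perp|=|\bar{H}|/|\bar{A}|$. Since $\bar{H}$ is an arbitrary finite abelian group rather than a vector space over a field, one cannot appeal to a naive dimension argument and must instead pass through the duality isomorphism $\Phi$ furnished by genericity, combined with the standard identification of the annihilator of $\bar{A}$ in $\widehat{\bar{H}}$ with $\widehat{\bar{H}/\bar{A}}$. Everything else is bookkeeping with indices and the already-established fact that maximal abelian subgroups of $H$ are polarizing.
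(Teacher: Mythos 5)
Your proof is correct, but it takes a genuinely different route from the paper's, whose entire proof is two sentences: the paper observes that a character faithful on the cyclic $[H,H]$ turns \eqref{symplectic-module} into a symplectic structure on $H/Z(H)$, and then invokes the Proposition in Section 2.2 of \cite{Symplectic} (Tignol--Amitsur's structure theory of symplectic modules) as a black box. You instead make the argument self-contained: after citing the paper's own observation preceding Corollary~\ref{cor-SV} (that such a $\chi$ is generic and that every maximal abelian subgroup is polarizing), you supply the one genuinely nontrivial count, $|\bar{A}^\perp|=|\bar{H}|/|\bar{A}|$, via the duality isomorphism $\Phi\colon\bar{H}\to\widehat{\bar{H}}$ together with the identification of $\Phi(\bar{A}^\perp)$ with the annihilator $\widehat{\bar{H}/\bar{A}}$ --- and you correctly flag that this is exactly where a naive dimension count would fail, since $\bar{H}$ is merely a finite abelian group, not a vector space. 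Your route buys independence from the external reference, using only Pontryagin duality and machinery already in the paper; the paper's citation buys brevity and situates the lemma within the general theory of symplectic modules. One small precision worth adding: the polarizing property of a maximal abelian $A$ uses the full hypothesis $\ker(\chi)\cap[H,H]=\{\1\}$ --- so that $\chi([x,a])=1$ for all $a\in A$ forces $[x,a]=\1$, whence $x$ centralizes $A$ and maximality gives $x\in A$ --- rather than genericity alone, which only guarantees that $\Phi$ is an isomorphism. Since you chose $\chi$ with exactly this property, and the paper's assertion before Corollary~\ref{cor-SV} derives both conclusions from that same hypothesis, this is a matter of phrasing rather than a gap in your argument.
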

\begin{proof} Since the commutator subgroup is cyclic then~\eqref{symplectic-module} gives a symplectic structure on $H/Z(H)$. Then the lemma is a direct consequence of Proposition, Section 2.2 of~\cite{Symplectic}.
\end{proof}
%================================================== 
\subsection{Proof of Theorem~\ref{2-step}}   
Let $(\rho,V)$ be a faithful representation of ${H}$ with minimal dimension. Then $\rho$ decomposes completely into irreducible representations $V_i$, $1\leq i\leq r$ where $r=\rank(Z(H))=\mff(Z(H))$. By Schur's lemma, for any $z\in Z(H)$ and any $v\in V_i$ we have
$\rho_{|_{Z(H)}}(z)(v)=\chi_i(z)v$, where $\chi_i$ is a one-dimensional representation of $Z(H)$. Let $h$ be a generator of $[H,H]$ of order $p^n$. We claim that there exists an $i$ such that $\chi_i(h)$ is a primitive $p^n$th root of unity in $\mathbb{C}$. Assume the contrary. Then for all $i$, we have $\chi_{i}(h^{p^{n-1}})=1$, which implies that $h^{p^{n-1}}\in \ker\rho$. This contradicts the fact that $\rho$ is a faithful representation. Hence there exists an $i$ such that  $\ker{\chi_i}\cap[H,H]=\{\mathbf 1\}$. By Corollary~\ref{cor-SV}, we should have $V_i=\Ind_A^H(\tilde{\chi_i})$, where $\tilde{\chi_i}$ is any extension of $\chi_i$ to $A$. Therefore $\dim(V_i)=[H:A]$ and so $\mff(H)\geq [H:A]+\mff(Z(H))-1$. By Corollary~\ref{Upper-cyclic-comm} we also have $\mff(H)\leq [H:A]+\mff(Z(H))-1$. These facts together with Lemma~\ref{max-[H:A]} proves Theorem~\ref{2-step}.
%===================================================
\section{Representations of Heisenberg groups}\label{heisenberg}
For a (commutative and unital) ring $R$, the {\it Heisenberg group} with entries in $R$ is defined by
$$
H:=\Hei_{2k+1}(R):=\left\{ ({\bf x}, {\bf y}, z):=
\begin{pmatrix}
1 & {\bf x} & z\\
0 & I_k & {\bf y}^T\\
0 & 0 & 1
\end{pmatrix}:\quad {\bf x,y }\in R^k,\, z\in R \right\}.
$$
We record two basic identities:
\begin{equation}\label{id-com}
\begin{split}
({\bf x}_1,{\bf y}_1,z_1)({\bf x},{\bf y},z)({\bf x}_1,{\bf y}_1,z_1)^{-1}&=({\bf x},{\bf y},{\bf x}_1{\bf y}^T-{\bf x}{\bf y}_1^T+z),\\
[({\bf x}_1,{\bf y}_1,z_1),({\bf x}_{2},{\bf y}_{2},z_{2})]&=(0,0,{\bf x}_1{\bf y}_{2}^T-{\bf x}_{2}{\bf y}_1^T).
\end{split}
\end{equation}
We will also make use of the following subgroups of $H$:
\begin{equation}\label{Def-AH}
A:=\left\{
({\bf {x}}, 0, z):\, {\bf x}\in R^k,\, z \in R\right\}, \quad L:=\left\{
(0, {\bf y}, 0):\, {\bf y}\in R^k\right\}, 
\quad
Z= \left\{
(0, 0, z):\, z \in R \right\}.
\end{equation}
It is easy to see that $A$ is a maximal abelian subgroup of $H$, $H=A\rtimes L$ and $Z$ is the center of $H$. We identify the center of $\Hei_{2k+1}(R)$ with $R$ in the obvious way.
%==============================================================
\subsection{Proof of Theorem \ref{Heisenberg}} We will need the following simple inequality later.
\begin{lemma}\label{ineq}
Let $a_0, \dots,  a_{ m -1}$ be non-negative real numbers with $\sum_{i=0}^{m-1}a_i=m$, and assume that for any $ 0  \le i \le m-1$ we have $ a_i + \cdots + a_{m-1} \le m-i$. Then for any decreasing sequence $x_0 \ge \cdots \ge x_{m-1}$, we have 
\[ \sum_{i=0}^{m-1 } a_i x_i \ge \sum_{i=0}^{m-1} x_i. \]
\end{lemma}

Now we recall briefly the {\it little group} method. For a more detailed discussion, we refer the reader to~\cite{Serre},  $\S$8.2. Let $H=\Hei_{2k+1}(\OO{n})$. Then $H=A\rtimes L$ where $A$  and $L$ are as in~\eqref{Def-AH}. Consider the action of $H$ on 
$\widehat{A}$ given by $(h\cdot\psi)(a):=\psi(h^{-1}ah)$. 
Let $(\psi_s)_{s\in \FF}$ be a system of  representatives for the $L$-orbits in $\widehat{A}$, and for each $s\in \FF$ set $L_s=\mathrm{Stab}_{L}(\psi_s)$ so that $h\cdot\psi_s=\psi_s$ for all $h\in L_s$. Set  $H_s:=A L_s$. Thus, one can show that the extension of $\psi_s$, defined by $\psi_s(ah):=\psi_s(a)$, $a\in A$ and $h\in L_s$, is a one-dimensional representation of $H_s$. Let $\lambda$ be an irreducible representation of $L_s$. We obtain
an irreducible representation $\tilde{\lambda}$ of $H_s$ by setting $\tilde{\lambda}(ah) := \lambda(h)$, $a\in A$ and $h\in L_s$. Now, $\psi_s\otimes\tilde{\lambda}$ gives an irreducible representation of $H_s$. Finally define $\theta_{s,\lambda}:=\mathrm{Ind}_{H_s}^H(\psi_s\otimes\tilde{\lambda})$. 
\begin{theorem}[Wigner-Mackey theory]~\label{Mackey} Under the above assumptions, we have 
\begin{enumerate}
\item[(i)] $\theta_{s,\lambda}$ is irreducible.
\item[(ii)] If $\theta_{s,\lambda}$ and $\theta_{s',\lambda'}$ are isomorphic, then $s=s'$ and $\lambda$ is isomorphic to $\lambda'$.
\item[(iii)] Every irreducible representation of $H$ is isomorphic to one of the $\theta_{s,\lambda}$.
\end{enumerate}
\end{theorem}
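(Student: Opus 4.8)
The plan is to recognize that this is exactly the ``Mackey machine'' for a semidirect product $H=A\rtimes L$ in which the normal factor $A$ is \emph{abelian}, so that the whole statement is the content of the general theory in~\cite{Serre}, $\S$8.2; the task therefore reduces to checking that the hypotheses are met and tracing how the special features enter, then running Mackey's irreducibility criterion and a counting argument. First I would record the preliminary facts that make the construction well defined. Since $A$ is abelian, it acts trivially on itself by conjugation and hence trivially on $\widehat{A}$, so the $H$-action on $\widehat{A}$ factors through $L$; and since $L_s=\mathrm{Stab}_L(\psi_s)$ fixes $\psi_s$, a short check on products shows that $\psi_s(ah):=\psi_s(a)$ is genuinely a homomorphism on $H_s=AL_s$. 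Pulling an irreducible $\lambda$ of $L_s$ back to $\tilde{\lambda}$ along the projection $H_s\to L_s$ and tensoring with the one-dimensional $\psi_s$ then yields an irreducible representation of $H_s$, as tensoring an irreducible with a one-dimensional representation preserves irreducibility.

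For part (i) I would apply Mackey's irreducibility criterion to $\theta_{s,\lambda}=\mathrm{Ind}_{H_s}^H(\psi_s\otimes\tilde{\lambda})$. Because $A$ is normal and contained in $H_s$, we have $A\subseteq H_s\cap gH_sg^{-1}$ for every $g\in H$. The restriction of $\psi_s\otimes\tilde{\lambda}$ to $A$ is a multiple of $\psi_s$, since $\tilde{\lambda}$ is trivial on $A$, while the restriction of its $g$-conjugate is a multiple of $g\cdot\psi_s$. Writing $g=a\ell$ with $\ell\in L$, triviality of the $A$-action gives $g\cdot\psi_s=\ell\cdot\psi_s$, and for $g\notin H_s$ one has $\ell\notin L_s$, whence $\ell\cdot\psi_s\neq\psi_s$. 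Thus already on $A$ the two representations share no common irreducible constituent, so they are disjoint on $H_s\cap gH_sg^{-1}$, and Mackey's criterion yields irreducibility.

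For part (ii), the same disjointness, fed into Mackey's intertwining-number theorem, collapses the double-coset sum to the single term of the trivial coset, so that $\langle\theta_{s,\lambda},\theta_{s',\lambda'}\rangle=0$ unless $\psi_s$ and $\psi_{s'}$ lie in the same $L$-orbit, i.e. $s=s'$; and in that case $\langle\theta_{s,\lambda},\theta_{s,\lambda'}\rangle=\langle\psi_s\otimes\tilde{\lambda},\psi_s\otimes\tilde{\lambda'}\rangle_{H_s}=\langle\lambda,\lambda'\rangle_{L_s}$, which forces $\lambda\cong\lambda'$. For part (iii) I would run the standard dimension count: since $\dim\theta_{s,\lambda}=[L:L_s]\dim\lambda$, summing squares gives $\sum_s[L:L_s]^2\sum_\lambda(\dim\lambda)^2=\sum_s[L:L_s]^2|L_s|=|L|\sum_s[L:L_s]=|L|\,|\widehat{A}|=|H|$, using that the $L$-orbits partition $\widehat{A}$ and $|\widehat{A}|=|A|$. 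As the $\theta_{s,\lambda}$ are pairwise non-isomorphic by (ii) and the sum of the squares of their dimensions already equals $|H|$, they exhaust $\Irr(H)$.

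The genuinely substantive step is the disjointness verification underlying (i) and (ii); everything else is bookkeeping. The simplifying input that makes it routine here is precisely that $A$ is abelian and normal, which guarantees $A\subseteq H_s\cap gH_sg^{-1}$ for all $g$ and reduces the disjointness of the conjugated representations to the elementary statement that $\ell\cdot\psi_s\neq\psi_s$ for $\ell\notin L_s$. I expect the only point requiring care to be the bookkeeping of the semidirect-product multiplication when checking that $\psi_s$ extends to a character of $H_s$ and when identifying the $A$-restriction of the $g$-conjugate with $g\cdot\psi_s$.
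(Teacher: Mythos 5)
Your proof is correct, but there is no in-paper argument to match: the paper states Theorem~\ref{Mackey} without proof, quoting it from~\cite{Serre}, $\S$8.2, so what you have written is a self-contained proof of the cited result. Your route is genuinely different from the one in the cited source. There the argument is Clifford-theoretic: one decomposes $\restr{\theta_{s,\lambda}}{A}$ into $A$-isotypic components indexed by the orbit $L\cdot\psi_s$, identifies the $\psi_s$-isotypic subspace with the inducing space carrying $\psi_s\otimes\tilde{\lambda}$ (any nonzero subrepresentation meets it, and it is $H_s$-irreducible, giving (i); the pair $(s,\lambda)$ is then read off from the restriction to $A$ together with the $H_s$-action on that subspace, giving (ii)), and (iii) is proved constructively: for an arbitrary irreducible $\pi$, the characters of $A$ occurring in $\restr{\pi}{A}$ form a single $L$-orbit, the $\psi_s$-isotypic component is an irreducible $H_s$-module necessarily of the form $\psi_s\otimes\tilde{\lambda}$, and $\pi\cong\Ind_{H_s}^H(\psi_s\otimes\tilde{\lambda})$. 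You instead invoke Mackey's irreducibility criterion for (i), the intertwining-number formula for (ii), and a sum-of-squares count for (iii); all three steps check out, including the identity $\sum_{s}\sum_{\lambda}[L:L_s]^2(\dim\lambda)^2=|L|\,|\widehat{A}|=|H|$, which uses $[H:H_s]=[L:L_s]$ and $|\widehat{A}|=|A|$. The substantive heart is the same in both treatments, and you isolated it accurately: since $A$ is abelian and normal, $A\subseteq H_s\cap gH_sg^{-1}$ for all $g$, and disjointness of the conjugated representations is detected already on $A$ via $\ell\cdot\psi_s\neq\psi_s$ for $\ell\notin L_s$. As for what each approach buys: your counting proof of (iii) is shorter but relies essentially on finiteness of $H$ and on completeness of complex characters ($\sum_{\pi\in\Irr(H)}(\dim\pi)^2=|H|$), whereas the Clifford-style argument locates the inducing datum inside an arbitrary given irreducible and is the form that generalizes beyond this setting. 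For the paper's purposes the two are interchangeable, since later uses (Remark~\ref{important-remark-central}, Corollary~\ref{level-dim}) only need the classification itself.
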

Recall that $\psi: \OO{n}\to\CC^*$ is our fixed primitive character. Then from Lemma~\ref{Character-local}, any element in the character group $\widehat{A}$ is of the form 
\begin{equation}
\psi_{{\bf b},b}({\bf x},0,z):=\psi_{\bf b}({\bf x})\psi_b(z)=\psi(b_1x_1+\dots+b_kx_k+bz),
\end{equation}
where 
${\bf b}=(b_1,\dots,b_k)\in (\OO{n})^k$, $b\in\OO{n}$ and $\psi_{\bf b}({\bf x})=\psi_{b_1}(x_1)\cdots\psi_{b_k}(x_k)$. 
\begin{proposition}\label{level} Let $\psi_{{\bf b},b}\in \widehat{A}$ with $b\in\mathfrak{p}^i/\mathfrak{p}^n$ and $b\not\in\mathfrak{p}^{i+1}/\mathfrak{p}^n$ for $0\leq i\leq n$. Then $\mathrm{Stab}_L(\psi_{{\bf b},b})$ has $q^{ik}$ elements.
\end{proposition}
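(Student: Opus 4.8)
The plan is to convert the stabilizer condition into a purely ring-theoretic one over $\OO{n}$ by making the $L$-action on $\widehat{A}$ completely explicit. First I would fix $h=(0,{\bf y}_1,0)\in L$ and $a=({\bf x},0,z)\in A$ and compute the conjugate $h^{-1}ah$. Applying the conjugation identity in~\eqref{id-com} with $g_1=h^{-1}=(0,-{\bf y}_1,0)$ and middle element $({\bf x},0,z)$ gives, after a one-line calculation, $h^{-1}ah=({\bf x},0,z+{\bf x}{\bf y}_1^T)$. Feeding this into the definition $(h\cdot\psi_{{\bf b},b})(a)=\psi_{{\bf b},b}(h^{-1}ah)$ and expanding $\psi_{{\bf b},b}({\bf x},0,z')=\psi(\sum_j b_jx_j+bz')$, I obtain
\[
(h\cdot\psi_{{\bf b},b})({\bf x},0,z)=\psi\Big(\textstyle\sum_j (b_j+by_{1,j})x_j+bz\Big),
\]
so that the action is governed by the clean formula $h\cdot\psi_{{\bf b},b}=\psi_{{\bf b}+b{\bf y}_1,\,b}$, where ${\bf y}_1=(y_{1,1},\dots,y_{1,k})$.

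With the action in hand, the next step is to read off the stabilizer. Since $A\cong(\OO{n})^{k+1}$ as an abelian group, Lemma~\ref{Character-local} guarantees that the pair $({\bf b},b)$ labelling a character of $A$ is \emph{unique}; hence $h$ fixes $\psi_{{\bf b},b}$ precisely when ${\bf b}+b{\bf y}_1={\bf b}$, that is, when $b{\bf y}_1=0$ in $(\OO{n})^k$. This decouples coordinate by coordinate, giving
\[
\mathrm{Stab}_L(\psi_{{\bf b},b})=\{(0,{\bf y}_1,0): by_{1,j}=0 \text{ in } \OO{n}\ \text{for all}\ 1\le j\le k\}\cong\big(\Ann(b)\big)^k,
\]
where $\Ann(b)$ denotes the annihilator of $b$ in the ring $\OO{n}$.

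It remains to count $\Ann(b)$, which is where the valuation hypothesis on $b$ enters. Writing $b=u\varpi^i$ with $u$ a unit (possible exactly because $b\in\mathfrak{p}^i/\mathfrak{p}^n$ but $b\notin\mathfrak{p}^{i+1}/\mathfrak{p}^n$), the equation $by=0$ in $\OO{n}$ is equivalent to $\varpi^i y\in\mathfrak{p}^n$, i.e. $y\in\mathfrak{p}^{n-i}/\mathfrak{p}^n$; hence $\Ann(b)=\mathfrak{p}^{n-i}/\mathfrak{p}^n$, a group of order $q^{\,n-(n-i)}=q^i$. Raising to the $k$-th power yields $|\mathrm{Stab}_L(\psi_{{\bf b},b})|=(q^i)^k=q^{ik}$, as claimed. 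None of the steps is genuinely deep; the two points demanding care are getting the direction of conjugation right in the opening computation (so that the placement of the ${\bf x}{\bf y}_1^T$ term, and hence the formula $h\cdot\psi_{{\bf b},b}=\psi_{{\bf b}+b{\bf y}_1,b}$, is correct) and correctly identifying $\Ann(b)$ with $\mathfrak{p}^{n-i}/\mathfrak{p}^n$, since the final count of $q^{ik}$ rests entirely on this identification.
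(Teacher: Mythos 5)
Your proof is correct and takes essentially the same route as the paper's: both use the conjugation identity~\eqref{id-com} to reduce membership in $\mathrm{Stab}_L(\psi_{{\bf b},b})$ to the condition $b{\bf y}=0$ in $(\OO{n})^k$ and then count the solutions, getting $q^{ik}$. The only cosmetic differences are that the paper conjugates by $h$ rather than $h^{-1}$ (immaterial for the stabilizer) and concludes ${\bf y}\in\left(\mathfrak{p}^{n-i}/\mathfrak{p}^n\right)^k$ directly from the primitivity of $\psi$, whereas you route the same fact through the uniqueness of the label $({\bf b},b)$ and the identification $\Ann(b)=\mathfrak{p}^{n-i}/\mathfrak{p}^n$ --- two formulations of one argument, since the uniqueness of labels itself rests on primitivity.
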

\begin{proof} Let $(0,{\bf y},0)\in \mathrm{Stab}_L(\psi_{{\bf b},b})$. 
By~\eqref{id-com}, for any $({\bf x},0,z)$ we have 
\begin{equation}
(0,{\bf y},0)({\bf x},0,z)(0,{\bf y},0)^{-1}=({\bf x},0,-{\bf x}{\bf y}^T+z).
\end{equation}
Since $(0,{\bf y},0)\in \mathrm{Stab}_L(\psi_{{\bf b},b})$, we obtain
\begin{equation}
\psi_{{\bf b},b}({\bf x},0,z)=\psi_{{\bf b},b}({\bf x},0,-{\bf x}{\bf y}^T+z)\ \ \text{for every}\ ({\bf x},0,z)\in A.
\end{equation}
This means that for all ${\bf x}\in (\OO{n})^k$ we have $\psi(b{\bf x}{\bf y}^T)=1$.
Since $b\in(\mathfrak{p}^i/\mathfrak{p}^n)\setminus (\mathfrak{p}^{i+1}/\mathfrak{p}^n)$ and $\psi$ is primitive, we must have $(0,{\bf y},0)\in (\mathfrak{p}^{(n-i)}/\mathfrak{p}^n)^k$. Therefore 
$$\mathrm{Stab}_L(\psi_{{\bf b},b})\cong(\mathfrak{p}^{(n-i)}/\mathfrak{p}^n)^k,$$
 which has $q^{ik}$ elements. 
\end{proof}
\begin{definition} For an arbitrary $\ell\geq 1$ and $b=\tilde{b}+\mathfrak{p}^\ell\in \OO{\ell}$, the level of $b$ is defined to be $\min\{\nu(\tilde{b}),\ell\}$. Similarly the level of the additive character $\psi_b:\OO{\ell}\to\CC^*$ is defined by the level of $b$.
\end{definition}
Let $(\psi_{{\bf b}_s,b_s})_{s\in \FF}$ be a system of representatives  for the orbits of $L$ in $\widehat{A}$. Then the representations $\theta_{s,\lambda}$, obtained from  Theorem~\ref{Mackey}, list all irreducible representations of $H$.
\begin{remark}\label{important-remark-central} The central character of the irreducible representation $\theta_{s,\lambda}$, obtained from $\psi_{{\bf b}_s,b_s}$, is $\psi_{b_s}$.
\end{remark} 
  Since any irreducible representation of $L$ is one-dimensional, Proposition~\ref{level} allows one to compute the dimension of $\theta_{s,\lambda}$. The following corollary shows that the dimension of any irreducible representation of the Heisenberg group $H$ is determined by the level of its central character.
\begin{corollary}\label{level-dim} The dimension of $\theta_{s,\lambda}$ is $q^{(n-m_s)k}$, where $m_s$ is the level of the central character of $\theta_{s,\lambda}$. 
\end{corollary}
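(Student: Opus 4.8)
The plan is to unwind the induced-representation construction from the little group method and reduce the dimension computation to the index $[L:L_s]$, which Proposition~\ref{level} already controls. First I would observe that since $L\cong(\OO{n})^k$ is abelian, its subgroup $L_s=\mathrm{Stab}_L(\psi_s)$ is abelian as well, so every irreducible representation $\lambda$ of $L_s$ is one-dimensional. Consequently the representation $\psi_s\otimes\tilde{\lambda}$ of $H_s$ that gets induced up to $H$ is one-dimensional, and therefore
\[
\dim(\theta_{s,\lambda})=\dim\Ind_{H_s}^H(\psi_s\otimes\tilde{\lambda})=[H:H_s].
\]

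Next I would compute this index using the semidirect product structure $H=A\rtimes L$. Since $L_s\leq L$ and $A\cap L=\{\1\}$, we have $A\cap L_s=\{\1\}$, so $H_s=AL_s$ has order $|A|\,|L_s|$, while $|H|=|A|\,|L|$. Cancelling $|A|$ gives $[H:H_s]=[L:L_s]$. Because $|L|=|(\OO{n})^k|=q^{nk}$, it remains only to determine $|L_s|$.

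Finally I would invoke Remark~\ref{important-remark-central} together with Proposition~\ref{level}. By the remark, the central character of $\theta_{s,\lambda}$ is $\psi_{b_s}$, whose level $m_s$ is by definition the integer $i$ with $b_s\in(\mathfrak{p}^i/\mathfrak{p}^n)\setminus(\mathfrak{p}^{i+1}/\mathfrak{p}^n)$. Proposition~\ref{level} then yields $|L_s|=q^{m_sk}$, and hence
\[
\dim(\theta_{s,\lambda})=[L:L_s]=\frac{q^{nk}}{q^{m_sk}}=q^{(n-m_s)k},
\]
as claimed.

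The only point requiring genuine care is the bookkeeping that identifies the exponent $i$ appearing in Proposition~\ref{level} with the level $m_s$ of the central character; this is precisely the content of the Definition of level combined with Remark~\ref{important-remark-central}. Everything else is a short index count, so I do not expect any substantive obstacle: the real work was already absorbed into the stabilizer count of Proposition~\ref{level}.
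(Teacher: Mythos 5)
Your proposal is correct and is exactly the argument the paper intends: the corollary is stated without a separate proof precisely because, as the preceding sentence notes, $L_s$ is abelian so $\lambda$ (hence $\psi_s\otimes\tilde{\lambda}$) is one-dimensional, giving $\dim(\theta_{s,\lambda})=[H:H_s]=[L:L_s]$, and Proposition~\ref{level} supplies $|L_s|=q^{m_sk}$ via Remark~\ref{important-remark-central}. Your spelled-out index count $[H:H_s]=[L:L_s]$ and the identification of the exponent $i$ in Proposition~\ref{level} with the level $m_s$ are both accurate, so nothing is missing.
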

Let $F$ be a $p$-adic field with absolute ramification index $e$ (see Section \ref{Sec:Intro}). Then $p\mathcal{O}=\mathfrak{p}^e$ and hence for $n\geq e$ we have $\OO{n}\otimes_{\ZZ}\FZ=\mathcal{O}/\mathfrak{p}^e$.
When $n<e$ we have $\OO{n}\otimes_{\ZZ}\FZ=\OO{n}$. In both cases, $\OO{n}\otimes_{\ZZ}\FZ=\OO{\xi}$ where $\xi=\min\{n,e\}$. For the local field $\F{q}((T))$ we obviously have $\OO{n}\otimes_{\ZZ}\FZ=\OO{n}$.
\begin{lemma}\label{rank-rami} Let $F$ be a non-Archimedean local field with the  absolute ramification index $e$ and the absolute inertia degree $f$. For each $n \ge 1$, we have $\Omega_1(\OO{n})=\mathfrak{p}^{n- \xi }/\mathfrak{p}^{n}$ and $d(\OO{n})= f \xi$, 
where $\xi= \min \left\{ n, e \right\}$.
\end{lemma}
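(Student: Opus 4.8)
The plan is to prove the two assertions separately, handling the $p$-adic and equal-characteristic cases in a single stroke by means of the identity $p\mathcal{O} = \mathfrak{p}^e$ (interpreted as $p = 0$, $e = \infty$, in the case $F = \F{q}((T))$).

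First I would pin down $\Omega_1(\OO{n})$ straight from the definition. Writing the group additively, $\Omega_1(\OO{n}) = \{\bar a \in \OO{n} : pa \in \mathfrak{p}^n\}$. Since multiplication by $p$ raises the valuation by exactly $e$, for nonzero $a \in \mathcal{O}$ we have $\nu(pa) = e + \nu(a)$, so $pa \in \mathfrak{p}^n$ holds if and only if $\nu(a) \geq n - e$, that is, $a \in \mathfrak{p}^{\max\{0,\, n-e\}}$. As $\max\{0,\, n-e\} = n - \min\{n, e\} = n - \xi$, this yields $\Omega_1(\OO{n}) = \mathfrak{p}^{n-\xi}/\mathfrak{p}^n$. (When $p = 0$ every class lies in $\Omega_1$, matching $\xi = n$ and $\mathfrak{p}^0/\mathfrak{p}^n = \OO{n}$.)

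For the formula $d(\OO{n}) = f\xi$ I would appeal to \eqref{Tensor-pabelian}, which identifies $d$ of a finite abelian $p$-group with the $\FZ$-dimension of its $\Omega_1$. By the first part $\Omega_1(\OO{n}) = \mathfrak{p}^{n-\xi}/\mathfrak{p}^n$; the filtration $\mathfrak{p}^{n-\xi} \supseteq \mathfrak{p}^{n-\xi+1} \supseteq \cdots \supseteq \mathfrak{p}^n$ has exactly $\xi$ successive quotients, each isomorphic to the residue field via the isomorphism $\mathfrak{p}^m/\mathfrak{p}^{m+1} \cong \mathcal{O}/\mathfrak{p}$ recorded in Section~\ref{Local-Fields}. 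Hence $|\Omega_1(\OO{n})| = q^\xi = p^{f\xi}$, and since $\Omega_1(\OO{n})$ is elementary abelian its $\FZ$-dimension equals $f\xi$. Alternatively, one may bypass $\Omega_1$ entirely and combine \eqref{Tensor-pabelian} with the already-established identity $\OO{n} \otimes_{\ZZ} \FZ = \OO{\xi}$, whose cardinality is again $q^\xi$.

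I do not anticipate a genuine obstacle: the argument is elementary once valuations are tracked carefully. The only delicate point is the uniform bookkeeping of the two cases through the conventions $e = \nu(p)$ and $\xi = \min\{n, e\}$ (so that $n - \xi = \max\{0,\, n-e\}$), together with the single key observation that multiplying by $p$ shifts valuation by precisely $e$, which is exactly what turns the torsion condition $pa \in \mathfrak{p}^n$ into the ideal membership $a \in \mathfrak{p}^{n-\xi}$.
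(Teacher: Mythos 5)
Your proposal is correct and matches the paper's intended argument: the paper states this lemma without a formal proof, but the surrounding text carries out exactly the same computation via $p\mathcal{O}=\mathfrak{p}^e$ and the identity $\OO{n}\otimes_{\ZZ}\FZ=\OO{\xi}$, combined with \eqref{Tensor-pabelian}. Your direct computation of $\Omega_1(\OO{n})$ as the kernel of multiplication by $p$ (with the valuation shift $\nu(pa)=e+\nu(a)$) is just the dual of the paper's cokernel computation, and you even note that alternative explicitly, so there is nothing to add.
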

Using this we now construct a faithful representation of $\Hei_{2k+1}(\OO{n})$.
\begin{lemma}\label{cons-faith} The group $\Hei_{2k+1}(\OO{n})$ has a faithful representation of dimension 
$
\sum_{i=0}^{ \xi -1} f q^{k(n-i)},
$
where $f$ is the absolute inertia degree, $e$ is the absolute ramification index, and  $\xi=\min\{n,e\}$. 
\end{lemma}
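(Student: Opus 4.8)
The plan is to invoke Lemma~\ref{central-span-faith}: it suffices to exhibit a family of irreducible representations of $H := \Hei_{2k+1}(\OO{n})$ whose central characters, restricted to $\Omega_1(Z(H))$, span the $\FZ$-vector space $\widehat{\Omega}_1(Z(H))$, and then to verify that the total dimension equals $\sum_{i=0}^{\xi-1} fq^{k(n-i)}$. Since $Z(H)\cong\OO{n}$, Lemma~\ref{rank-rami} gives $\Omega_1(Z(H))=\mathfrak{p}^{n-\xi}/\mathfrak{p}^n$ and $\dim_{\FZ}\widehat{\Omega}_1(Z(H))=d(\OO{n})=f\xi$. Thus I want exactly $r=f\xi$ representations, and the dimension bookkeeping dictates taking $f$ of them at each of the levels $m=0,1,\dots,\xi-1$: by Corollary~\ref{level-dim} an irreducible representation whose central character has level $m$ has dimension $q^{(n-m)k}$, and $\sum_{m=0}^{\xi-1}fq^{(n-m)k}$ is precisely the target.

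The heart of the matter is choosing the central characters. By Remark~\ref{important-remark-central} the available central characters are exactly the $\psi_b$, $b\in\OO{n}$, viewed on $Z(H)$. Restricting further to $\Omega_1(Z(H))=\mathfrak{p}^{n-\xi}/\mathfrak{p}^n$ and applying Lemma~\ref{Character-local} (with $m=n-\xi$, so that $\Ann(\mathfrak{p}^{n-\xi}/\mathfrak{p}^n)=\mathfrak{p}^\xi/\mathfrak{p}^n$), the assignment $b\mapsto\restr{\psi_b}{\Omega_1(Z(H))}$ induces a group isomorphism $\mathcal{O}/\mathfrak{p}^\xi\cong\widehat{\Omega}_1(Z(H))$. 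Because $\xi=\min\{n,e\}\le e$, we have $p\mathcal{O}=\mathfrak{p}^e\subseteq\mathfrak{p}^\xi$, so $\mathcal{O}/\mathfrak{p}^\xi$ is elementary abelian and this is in fact an isomorphism of $\FZ$-vector spaces. Under it, the level of $b$ corresponds to the position of $b\bmod\mathfrak{p}^\xi$ in the filtration $\mathcal{O}/\mathfrak{p}^\xi\supseteq\mathfrak{p}/\mathfrak{p}^\xi\supseteq\cdots\supseteq\mathfrak{p}^{\xi-1}/\mathfrak{p}^\xi\supseteq 0$, whose successive quotients are each isomorphic to $\mathbb{F}_q$ and hence $f$-dimensional over $\FZ$.

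Next I would produce a graded basis realizing the desired level distribution. Fixing a uniformizer $\varpi$ and elements $\omega_1,\dots,\omega_f\in\mathcal{O}$ whose residues form an $\FZ$-basis of the residue field $\mathbb{F}_q$, the collection $\{\varpi^m\omega_j:0\le m\le\xi-1,\ 1\le j\le f\}$ maps to an $\FZ$-basis of $\mathcal{O}/\mathfrak{p}^\xi$, since it has the correct cardinality $f\xi$ and projects to a basis in each graded piece of the filtration. Each $\varpi^m\omega_j$ has level $m$. For each such element, Remark~\ref{important-remark-central} together with the little-group construction of Theorem~\ref{Mackey} (taking $\lambda$ trivial on the orbit of $\psi_{0,\varpi^m\omega_j}$) supplies an irreducible representation $\rho_{m,j}$ of $H$ with central character $\psi_{\varpi^m\omega_j}$, which by Corollary~\ref{level-dim} has dimension $q^{(n-m)k}$.

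Finally, the restricted central characters $\{\restr{\psi_{\varpi^m\omega_j}}{\Omega_1(Z(H))}\}$ form a basis of $\widehat{\Omega}_1(Z(H))$ by construction, so Lemma~\ref{central-span-faith} shows that $\rho:=\bigoplus_{m,j}\rho_{m,j}$ is faithful, and
\[
\dim\rho=\sum_{m=0}^{\xi-1}\sum_{j=1}^{f}q^{(n-m)k}=\sum_{m=0}^{\xi-1}fq^{(n-m)k}=\sum_{i=0}^{\xi-1}fq^{k(n-i)},
\]
as required. I expect the main obstacle to be the second step: setting up the $\FZ$-linear identification of $\widehat{\Omega}_1(Z(H))$ with $\mathcal{O}/\mathfrak{p}^\xi$ and checking that the level stratification matches the $\mathfrak{p}$-adic filtration. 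This is exactly where the ramification index $e$ enters, through the inequality $\xi\le e$ that makes $\mathcal{O}/\mathfrak{p}^\xi$ elementary abelian and thereby determines which levels can contribute linearly independent characters to the spanning set.
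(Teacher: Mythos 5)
Your proposal is correct and follows essentially the same route as the paper's own proof: the same graded family $b_{ij}=\omega_i\varpi^j$ of levels $0,\dots,\xi-1$, the same identification $\OO{\xi}\cong\widehat{\Omega}_1(Z(H))$ via Lemma~\ref{Character-local}, irreducibles supplied by Theorem~\ref{Mackey} with dimensions read off from Corollary~\ref{level-dim}, and faithfulness via Lemma~\ref{central-span-faith}. Your only addition is the explicit check that $p\mathcal{O}\subseteq\mathfrak{p}^{\xi}$ makes $\OO{\xi}$ elementary abelian, a point the paper leaves implicit in asserting that the group isomorphism~\eqref{can-iso} is $\FZ$-linear.
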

\begin{proof}
Let $\omega_1,\dots,\omega_f$ be units in $\mathcal{O}$ such that $\{\omega_1+\mathfrak{p},\dots,\omega_f+\mathfrak{p}\}$ forms a basis for $\mathcal{O}/\mathfrak{p}$ over $\FZ$. Define
\begin{equation}\label{bij}
b_{ij}=\omega_i\varpi^j\ ,\  1\leq i\leq f,\, 0\leq j\leq \xi-1,
\end{equation}
where $\xi=\min\{n,e\}$. Set $b'_{ij}:=b_{ij}+\mathfrak{p}^n$. Thus the set $\{\psi_{b'_{ij}}\}$
contains exactly $f$ elements of the level $j$ for each $0\leq j\leq \xi-1$. Using Theorem~\ref{Mackey}, we can construct an irreducible representation $\theta_{ij}$ of $H$ with the central character $\psi_{b'_{ij}}$. Notice that $\Omega_1(\OO{n})=\mathfrak{p}^{n- \xi }/\mathfrak{p}^{n}$ and so by Lemma~\ref{Character-local}, 
\begin{equation}\label{can-iso}
\Psi: \OO{\xi}\rightarrow\widehat{\Omega}_1(\OO{n})\ ,\ b+\mathfrak{p}^\xi\mapsto \restr{{\psi_{(b+\mathfrak{p}^n)}}}{{\Omega_1(\OO{n})}}, 
\end{equation}
is a group isomorphism and hence is a $\Z{p}$-vector space isomorphism.
It is easy to verify that the set
\begin{equation}\label{cons-basis}
\left\{b_{ij}+\mathfrak{p}^\xi \ :\  1\leq i\leq f, 0\leq j\leq \xi-1\right\},
\end{equation}
is a basis for the $\Z{p}$-vector space $\OO{\xi}$, and therefore from the $\Z{p}$-isomorphism~\eqref{can-iso} we can conclude that the restrictions of $\{\psi_{b'_{ij}}: 1\leq i\leq f, 0\leq j\leq\xi-1\}$ to $\Omega_1(\OO{n})$, is a basis for $\widehat{\Omega}_1(\OO{n})$. Thus by Lemma~\ref{central-span-faith},
$$\rho:=\bigoplus_{i,j}\theta_{ij}\ ,\  1\leq i\leq f,\ 0\leq j\leq \xi-1,$$ is a faithful representation of $H$. Now by Corollary~\ref{level-dim}, we have
$
\dim\rho=\sum_{i=0}^{ \xi -1} f q^{k(n-i)}.
$
\end{proof}
Next we prove the main theorem of this section. 
\begin{proof}[ Proof of the Theorem \ref{Heisenberg}] 
Let $r:=\rank(\OO{n})=f \xi$ (see Lemma~\ref{rank-rami}).
Let $\rho$ be a faithful representation of $H$ of minimal dimension and let 
\begin{equation}
\rho=\theta_{s_1,\lambda_1}\oplus\dots\oplus\theta_{s_r,\lambda_r},
\end{equation}
be the decomposition of $\rho$ into irreducible factors.
Let $(\psi_{b'_i})_{1\leq i\leq r}$, where $b_i'=b_i+\mathfrak{p}^n\in\OO{n}$, be the central character of $\theta_{s_i,\lambda_i}$. From Lemma~\ref{Meyer}, we know that $\{\psi_{b'_i}: 1\leq i\leq r\}$, viewed as
elements of $\widehat{\Omega}_1(\OO{n})$, is a basis for $\widehat{\Omega}_1(\OO{n})$. From the $\Z{p}$-isomorphism~\eqref{can-iso}, the set
 \begin{equation}\label{basisI}
\left\{b_1+\mathfrak{p}^\xi,\dots, b_r+\mathfrak{p}^\xi \right\},
\end{equation} 
is a basis for the $\FZ$-vector space $\OO{\xi}$.
For each $0\leq i\leq \xi-1$, the dimension of $\mathfrak{p}^i/\mathfrak{p}^\xi$ over $\FZ$ is $(\xi-i)f$. Therefore the number of elements in the basis~\eqref{basisI} with level at least $i$ is at most $(\xi-i)f$. For $0 \le i \le \xi-1$, let $\alpha_i$ denote the number of elements in the set~\eqref{basisI} which are of level $i$. Then $ \alpha_i + \cdots + \alpha_{\xi-1} \le (\xi- i)f$, for all 
$0 \le i \le \xi -1$ and $  \alpha_0 + \cdots + \alpha_{\xi-1} = \xi f$. From Corollary~\ref{level-dim} we conclude that the dimension of $\rho$ is $\sum_{i=0}^{ \xi -1}  \alpha_i q^{k(n-i)}$. Now by applying Lemma~\ref{ineq} (for $a_i=\alpha_i/f$ and $m=\xi-1$), we find that the dimension of the representation $\rho$ is at least
$
 \sum_{i=0}^{ \xi -1} f q^{k(n-i)}. 
$
Lemma~\ref{cons-faith} completes the proof.
  \end{proof}
%==============================================
\section{Weil representation and proof of Theorem~\ref{Unipotent}}\label{Weil-Sec} This section is devoted to the proof of Theorem~\ref{Unipotent}. Set $R=\OO{n}$ and assume that $\mathrm{char}(\mathcal{O}/\mathfrak{p})\neq 2$. Let $\U_k(R)\subseteq\GL_k(R)$ denote the group of unitriangular matrices. It is easy to see that $\Hei_{2k+1}(R)$ is a normal subgroup of $\U_{k+2}(R)$ and $\U_{k+2}(R)=\Hei_{2k+1}(R)\rtimes \U_k(R)$.  We also remark that $Z(\U_{k+2}(R))=Z(\Hei_{2k+1}(R))$.
The Heisenberg group $\Hei_{2k+1}(R)$ may be realized as follows. Let $V=R^{k}\times R^{k}$ be a finite and free $R$-module. Define the symplectic form 
\begin{equation}\label{special-symp}
\langle (\x_1,\y_1),(\x_2,\y_2)\rangle=\x_1\y_2^T-\y_1\x_2^T.
\end{equation}
The Heisenberg group is defined as 
$H(V)=\{(r,v): r\in R, v\in V\}$
 with multiplication $$
(r_1,v_1)(r_2,v_2)=(r_1+r_2+\langle v_1,v_2\rangle, v_1+v_2).$$
The symplectic group 
$$\Sp=\{g\in \GL(V): \langle gv_1,gv_2\rangle=\langle v_1,v_2\rangle,\, \forall v_1,v_2\in V\},$$
 acts on the Heisenberg group $H(V)$ by $g(r,v)=(r,gv)$ for $g\in \Sp$ and $(r,v)\in H(V)$. It is easy to see that the group $H(V)$ is indeed isomorphic to $\Hei_{2k+1}(R)$. Moreover we can identify $\U_{k+2}(R)$ with a subgroup of $H(V)\rtimes \Sp$. Given an ideal  $\aideal\unlhd R$, we set
$$
V(\aideal)=\{v\in V: \langle v,V\rangle\subseteq \aideal\},
$$
and denote the quotient module $V/V(\aideal)$ by $V_{\aideal}$. Obviously $V(\aideal_1)\subseteq V(\aideal_2)$ when $\aideal_1 \subseteq \aideal_2$.
For $b\in R$, let $\psi_b$ be an additive character of $R$. The set of ideals of $R$ contained
in $\ker\psi_b$ has a unique maximal (with respect to inclusion) element $\aideal_{\psi_b}$, which is called the conductor of $\psi_b$.
\begin{lemma}\label{hei-ker} Let $b\in\OO{n}$ with the level $i$ where $0\leq i\leq n$. Then $\aideal_{\psi_b}=\mathfrak{p}^{n-i}/\mathfrak{p}^n$.
\end{lemma}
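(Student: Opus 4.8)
The plan is to exploit the fact that the ideals of $R=\OO{n}$ form a single chain, so that identifying the conductor amounts to locating one index. Concretely, the only ideals of $\OO{n}$ are $\mathfrak{p}^j/\mathfrak{p}^n$ for $0\le j\le n$, and these are nested, decreasing in $j$. Hence $\aideal_{\psi_b}$, being the largest ideal contained in $\ker\psi_b$, must equal $\mathfrak{p}^{j_0}/\mathfrak{p}^n$, where $j_0$ is the smallest $j$ for which $\mathfrak{p}^j/\mathfrak{p}^n\subseteq\ker\psi_b$. So I would first reduce the statement to computing this minimal index $j_0$ and showing that it equals $n-i$.

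Next I would unwind what it means for $\psi_b$ to be trivial on the ideal $\mathfrak{p}^j/\mathfrak{p}^n$. By definition $\psi_b(x)=\psi(bx)$, so $\mathfrak{p}^j/\mathfrak{p}^n\subseteq\ker\psi_b$ exactly when $\psi$ is trivial on the set $b\cdot(\mathfrak{p}^j/\mathfrak{p}^n)$. The key computation is that this set is again an ideal, equal to $\mathfrak{p}^{i+j}/\mathfrak{p}^n$ (with the convention that it is $\{0\}$ once $i+j\ge n$). This follows because $b$ has level $i$: choosing a lift $\tilde b$ with $\nu(\tilde b)=i$ we may write $\tilde b=u\varpi^i$ for a unit $u\in\mathcal{O}^\times$, and multiplication by $u\varpi^i$ carries $\mathfrak{p}^j/\mathfrak{p}^n$ onto $\mathfrak{p}^{i+j}/\mathfrak{p}^n$.

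Finally I would invoke the primitivity of the fixed character $\psi$. Since $\ker\psi$ contains no non-zero ideal of $R$, the character $\psi$ restricts trivially to the ideal $\mathfrak{p}^{i+j}/\mathfrak{p}^n$ if and only if that ideal is $\{0\}$, i.e.\ if and only if $i+j\ge n$. Thus $\mathfrak{p}^j/\mathfrak{p}^n\subseteq\ker\psi_b$ precisely when $j\ge n-i$, so the minimal such index is $j_0=n-i$, giving $\aideal_{\psi_b}=\mathfrak{p}^{n-i}/\mathfrak{p}^n$ as claimed. The argument is essentially routine; the only point requiring a little care is the identity $b\cdot(\mathfrak{p}^j/\mathfrak{p}^n)=\mathfrak{p}^{i+j}/\mathfrak{p}^n$, where one must handle the degenerate cases $i+j\ge n$ (including $b=0$, i.e.\ level $i=n$) correctly so that the boundary value $j_0=n-i$ comes out right.
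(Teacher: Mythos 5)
Your proof is correct and follows essentially the same route as the paper: both arguments rest on the primitivity of $\psi$ (no nonzero ideal lies in $\ker\psi$) together with the valuation shift $b\cdot(\mathfrak{p}^j/\mathfrak{p}^n)=\mathfrak{p}^{\min(i+j,n)}/\mathfrak{p}^n$ coming from the level of $b$. Your explicit framing via the chain of ideals of $\OO{n}$ is merely a tidier repackaging of the paper's direct two-inclusion argument, with the added virtue of handling the boundary cases $i=n$ and $b=0$ explicitly.
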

\begin{proof}
Clearly we have $\mathfrak{p}^{n-i}/\mathfrak{p}^n\subseteq \aideal_{\psi_b}$. Now let $x\in \aideal_{\psi_b}$. Then for any $s\in\OO{n}$ we have $\psi(bxs)=1$. Since the level of $b$ is $i$ and $\psi$ is primitive, we have $x\in \mathfrak{p}^{n-i}/\mathfrak{p}^n$.
\end{proof}
We are ready to state the existence of the  Schr\"{o}dinger representation.
  \begin{proposition}\label{Cliff-prop}
Let $\psi_b$ be an additive character of $R=\OO{n}$ and assume $\mathrm{char}(\mathcal{O}/\mathfrak{p})\neq 2$. Then there exists a unique
irreducible representation $\sigma_{\psi_b}$ (called the Schr\"{o}dinger representation) of $H(V)$ with central character $\psi_b$ which is $\Sp$-invariant. Its dimension is equal to $\sqrt{|V_{\aideal_{\psi_b}}|}$.
\end{proposition}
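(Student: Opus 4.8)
The plan is to reduce the statement to the non-degenerate situation governed by the Stone--von~Neumann Theorem (Theorem~\ref{SV}), by passing to a quotient of $H(V)$ on which $\psi_b$ becomes generic, and then to read off existence, $\Sp$-invariance, and the dimension from the uniqueness clause of that theorem. First I would analyze the alternating $\CC^*$-valued pairing $(v_1,v_2)\mapsto\psi_b(\langle v_1,v_2\rangle)$ on $V$ and identify its radical. Since $\langle v,V\rangle$ is an ideal of $R$ and $\aideal_{\psi_b}$ is by definition the largest ideal contained in $\ker\psi_b$, the radical is exactly $W:=V(\aideal_{\psi_b})$; here Lemma~\ref{hei-ker} pins down $\aideal_{\psi_b}=\mathfrak{p}^{n-i}/\mathfrak{p}^n$ when $b$ has level $i$, so that $W=\aideal_{\psi_b}V$ and $|V/W|=|V_{\aideal_{\psi_b}}|$.

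Using the commutator identity $[(r_1,v_1),(r_2,v_2)]=(2\langle v_1,v_2\rangle,0)$ (legitimate since $\car(\mathcal{O}/\mathfrak{p})\neq 2$) together with $\langle W,V\rangle\subseteq\aideal_{\psi_b}\subseteq\ker\psi_b$, I would verify that $K:=\{(r,w):w\in W,\ r\in\ker\psi_b\}$ is a normal subgroup of $H(V)$ and that $\bar H:=H(V)/K$ is a finite two-step nilpotent group whose center is the cyclic group $N/K\cong R/\ker\psi_b$, where $N=\{(r,w):w\in W\}$. By construction $\psi_b$ descends to a faithful character $\bar\psi_b$ of $Z(\bar H)$, and the induced pairing on $\bar H/Z(\bar H)\cong V_{\aideal_{\psi_b}}$ is non-degenerate; that is, $\bar\psi_b$ is generic. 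With this reduction, existence and the dimension are immediate: applying Theorem~\ref{SV} through Corollary~\ref{cor-SV} (note $\ker(\bar\psi_b)\cap[\bar H,\bar H]=\{\1\}$, since $\bar\psi_b$ is faithful) yields a unique irreducible representation $\bar\sigma$ of $\bar H$ with central character $\bar\psi_b$, realized as $\Ind_{A}^{\bar H}(\tilde{\bar\psi}_b)$ for any maximal abelian $A$. Pulling back along $H(V)\twoheadrightarrow\bar H$ produces $\sigma_{\psi_b}$, an irreducible representation of $H(V)$ with central character $\psi_b$, and since $[\bar H,\bar H]$ is cyclic, Lemma~\ref{max-[H:A]} gives $\dim\sigma_{\psi_b}=[\bar H:A]=\sqrt{[\bar H:Z(\bar H)]}=\sqrt{|V_{\aideal_{\psi_b}}|}$, as claimed.

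For $\Sp$-invariance I would observe that every $g\in\Sp$ fixes the symplectic form, hence preserves $W$ and $K$, and therefore descends to an automorphism of $\bar H$ fixing $\bar\psi_b$; consequently $\sigma_{\psi_b}\circ g^{-1}$ inflates an irreducible representation of $\bar H$ with central character $\bar\psi_b$, and the uniqueness clause of Theorem~\ref{SV} forces $\sigma_{\psi_b}\circ g^{-1}\cong\sigma_{\psi_b}$. The remaining point, which I expect to require the most care, is uniqueness among \emph{all} $\Sp$-invariant irreducibles with central character $\psi_b$: when $b$ is not a unit there are many irreducibles sharing this central character, distinguished in the language of Theorem~\ref{Mackey} by the extra parameter $\lambda$. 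To any such $\tau$ I would attach the character $c$ of $W$ through which the scalar-acting subgroup $\{(0,w):w\in W\}$ operates (this is well defined: each $(0,w)$ commutes with the image of $\tau$ because $\langle W,V\rangle\subseteq\ker\psi_b$, and $w\mapsto c(w)$ is multiplicative since $\langle W,W\rangle\subseteq\ker\psi_b$). Conjugating by $g\in\Sp$ sends this invariant to $c\circ g^{-1}$, so $\Sp$-invariance of $\tau$ forces $c$ to be $\Sp$-fixed.

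The main obstacle is then the linear-algebra fact that $\Sp$ fixes no non-trivial character of $W=\aideal_{\psi_b}V$: as an $R$-module $W\cong(\mathcal{O}/\mathfrak{p}^i)^{2k}$, on which the $\Sp$-action factors through the symplectic module modulo $\mathfrak{p}^i$, and for $k\geq 1$ this module has no non-zero $\Sp$-fixed vectors, nor (by symplectic self-duality) non-trivial fixed characters. Granting this, $c\equiv 1$, so $\tau$ is trivial on $K$, factors through $\bar H$, and is therefore isomorphic to $\sigma_{\psi_b}$ by the uniqueness in Theorem~\ref{SV}. Everything else in the argument is formal, flowing from the quotient construction and the Stone--von~Neumann Theorem; the points deserving careful checking are the normality of $K$, the identification of $Z(\bar H)$, and this final fixed-character statement.
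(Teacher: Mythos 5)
Your proposal is correct, but it is worth pointing out that the paper does not prove Proposition~\ref{Cliff-prop} at all: its ``proof'' is the citation to \cite[Proposition 2.2]{Cliff}, where the statement is established by Clifford-theoretic methods for symplectic modules over rather general finite rings. Your argument is therefore a genuinely different, self-contained route built from tools already in the paper, and I have checked that it goes through. The radical of $(v_1,v_2)\mapsto\psi_b(\langle v_1,v_2\rangle)$ is indeed $W=V(\aideal_{\psi_b})$, exactly because $\langle v,V\rangle$ is an ideal and $\aideal_{\psi_b}$ is the largest ideal inside $\ker\psi_b$; your $K$ is normal since conjugation by $(r',v')$ shifts the central coordinate of $(r,w)$ by $2\langle v',w\rangle\in\aideal_{\psi_b}\subseteq\ker\psi_b$; the center of $\bar{H}=H(V)/K$ is $N/K\cong R/\ker\psi_b$, which is cyclic, and here the hypothesis $\car(\mathcal{O}/\mathfrak{p})\neq 2$ enters through the unit $2$ in the commutator identity $[(r_1,v_1),(r_2,v_2)]=(2\langle v_1,v_2\rangle,0)$. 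Since $[\bar{H},\bar{H}]\subseteq Z(\bar{H})$ and $\bar{\psi}_b$ is faithful on $Z(\bar{H})$, the hypothesis $\ker(\bar{\psi}_b)\cap[\bar{H},\bar{H}]=\{\1\}$ of Corollary~\ref{cor-SV} is automatic, and Lemma~\ref{max-[H:A]} gives $\dim\sigma_{\psi_b}=\sqrt{[\bar{H}:Z(\bar{H})]}=\sqrt{|V_{\aideal_{\psi_b}}|}$. The real content beyond Stone--von~Neumann is your uniqueness device: by Schur's lemma any irreducible $\tau$ with central character $\psi_b$ sends each $(0,w)$, $w\in W$, to a scalar $c(w)$ (since $\tau([(0,w),h])=\psi_b(2\langle w,v'\rangle)=1$), the map $c$ is a character because $\langle W,W\rangle\subseteq\ker\psi_b$, and $\Sp$-invariance forces $c\circ g=c$ for all $g\in\Sp$. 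This correctly isolates where $\Sp$-invariance is needed: it kills the extra parameter $\lambda$ of Theorem~\ref{Mackey} when $b$ has positive level. What each approach buys: the paper's citation covers a more general setting at no local cost; your proof keeps the paper self-contained and makes the mechanism of the uniqueness transparent.

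The one step you leave as ``granting this'' --- that $\Sp$ fixes no non-trivial character of $W$ --- is true but deserves to be closed, and it closes in one line, more simply than via surjectivity of reduction and symplectic self-duality: the element $-\mathrm{Id}$ lies in $\Sp$ and acts on $W$ by $w\mapsto -w$, so an $\Sp$-fixed character satisfies $c(w)=c(-w)=c(w)^{-1}$, hence takes values in $\{\pm 1\}$; since $|W|$ is a power of the odd prime $p$ (again using $\car(\mathcal{O}/\mathfrak{p})\neq 2$), this forces $c\equiv 1$. With that line supplied, your argument is a complete and correct alternative proof; it also handles the boundary cases gracefully (level $0$, where $W=0$ and the statement is plain Stone--von~Neumann, and $b=0$, where $\bar{H}$ is trivial and $\sigma_{\psi_b}$ is the trivial representation).
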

\begin{proof}
See~\cite[Proposition 2.2]{Cliff}.
\end{proof}
\begin{remark} It is important to notice that the uniqueness in the above proposition may fail if we do not impose the $\Sp$-invariance condition.   
\end{remark}
For the symplectic form~\eqref{special-symp} we can compute the dimension of the Schr\"{o}dinger representation precisely. 
\begin{lemma}\label{n-i} Let $b\in\OO{n}$ be an element of level $i$. Then the dimension of the Schr\"{o}dinger representation $\sigma_{\psi_b}$ associated to $\psi_{b}$ is $q^{k(n-i)}$.
\end{lemma}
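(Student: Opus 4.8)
The plan is to reduce the claim directly to the dimension formula of Proposition~\ref{Cliff-prop}, which asserts that $\dim \sigma_{\psi_b} = \sqrt{|V_{\aideal_{\psi_b}}|}$. Thus everything comes down to computing the order of the finite module $V_{\aideal_{\psi_b}} = V/V(\aideal_{\psi_b})$. First I would invoke Lemma~\ref{hei-ker} to replace the conductor by the explicit ideal $\aideal := \mathfrak{p}^{n-i}/\mathfrak{p}^n$ of $R = \OO{n}$, which is legitimate precisely because $b$ has level $i$. After this substitution the statement becomes a bookkeeping computation for the concrete symplectic form~\eqref{special-symp}.

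The heart of the argument is to identify $V(\aideal)$ explicitly. Writing a vector as $v = (\x,\y)$ with $\x,\y \in R^k$, I would pair $v$ against the vectors supported on a single coordinate of each factor. Since $\langle (\x,\y),(0,\mathbf{e}_j)\rangle = x_j$ and $\langle (\x,\y),(\mathbf{e}_j,0)\rangle = -y_j$, where $\mathbf{e}_j$ denotes the $j$-th standard basis vector of $R^k$, the defining condition $\langle v, V\rangle \subseteq \aideal$ forces every coordinate of $\x$ and of $\y$ to lie in $\aideal$. Conversely, if $\x,\y \in \aideal^k$ then bilinearity together with the fact that $\aideal$ is an ideal gives $\langle v, V\rangle \subseteq \aideal$. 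Hence $V(\aideal) = \aideal^k \times \aideal^k = \aideal^{2k}$, and therefore $V_{\aideal} = V/V(\aideal) \cong (R/\aideal)^{2k}$.

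It then remains to count. By the third isomorphism theorem, already recorded in the discussion preceding Lemma~\ref{Character-local}, we have $R/\aideal = (\OO{n})/(\mathfrak{p}^{n-i}/\mathfrak{p}^n) \cong \OO{n-i}$, which has $q^{n-i}$ elements since each successive quotient $\mathfrak{p}^j/\mathfrak{p}^{j+1}$ is isomorphic to the residue field $\mathcal{O}/\mathfrak{p}$ of order $q$. Consequently $|V_{\aideal}| = q^{2k(n-i)}$, and taking the square root yields $\dim \sigma_{\psi_b} = q^{k(n-i)}$, as desired.

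I do not expect a genuine obstacle here: the only step carrying any content is the identification $V(\aideal) = \aideal^{2k}$, and even that is immediate once the form is written in coordinates. The one point requiring care is keeping the filtration indices aligned when passing from the level $i$ to the conductor $\mathfrak{p}^{n-i}/\mathfrak{p}^n$ and then to the quotient $\OO{n-i}$, so that the exponents match correctly in the final answer $q^{k(n-i)}$.
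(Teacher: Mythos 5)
Your proof is correct and follows essentially the same route as the paper: invoke Lemma~\ref{hei-ker} to identify the conductor as $\mathfrak{p}^{n-i}/\mathfrak{p}^n$, compute $V(\aideal_{\psi_b})=\left(\mathfrak{p}^{n-i}/\mathfrak{p}^n\right)^k\times\left(\mathfrak{p}^{n-i}/\mathfrak{p}^n\right)^k$ from the explicit form~\eqref{special-symp}, and conclude via the dimension formula of Proposition~\ref{Cliff-prop}. The only difference is cosmetic: you count $|V_{\aideal}|$ via $R/\aideal\cong\OO{n-i}$, while the paper counts $|\mathfrak{p}^{n-i}/\mathfrak{p}^n|=q^i$ and divides, and you spell out the coordinate check of $V(\aideal)=\aideal^{2k}$ that the paper leaves implicit.
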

\begin{proof} From Lemma~\ref{hei-ker} we now that $\aideal_{\psi_b}=\mathfrak{p}^{n-i}/\mathfrak{p}^n$. Then~\eqref{special-symp} shows that 
$$
V(\aideal_{\psi_{b}})=\left(\mathfrak{p}^{n-i}/\mathfrak{p}^n\right)^k\times \left(\mathfrak{p}^{n-i}/\mathfrak{p}^n\right)^k.
$$
But $|\mathfrak{p}^{n-i}/\mathfrak{p}^n|=q^{i}$ and so Proposition~\ref{Cliff-prop} completes the proof.
\end{proof}
\begin{definition} Let $\psi_b$ be an additive character of $R$ with the Schr\"{o}dinger representation $\sigma_{\psi_b} : H(V)\to \GL(X)$. A Weil representation of type $\psi_b$, is a linear representation $\tilde{\sigma}_{\psi_b}: \Sp\to \GL(X)$ such that for all $h\in H(V)$ and $g\in\Sp$
\begin{equation}\label{weil}
\tilde{\sigma}_{\psi_b}(g)\sigma_{\psi_b}(h)=\sigma_{\psi_b}(g\cdot h)\tilde{\sigma}_{\psi_b}(g).
\end{equation}
\end{definition} 
Since the Weil representation is linear, from~\eqref{weil} we can deduce that the Schr\"{o}dinger representation $\sigma_{\psi_b}$ can be extended to $H(V)\rtimes \Sp$ by mapping $(h,g)$ to $\sigma_{\psi_b}(h)\tilde{\sigma}_{\psi_b}(g)$. 
We will spend the rest of this section proving Theorem~\ref{Unipotent}.
\begin{proposition} For each additive character ${\psi_b}$ of $R$ there exists a Weil representation $\tilde{\sigma}_{\psi_b}$ of type ${\psi_b}$.
\end{proposition}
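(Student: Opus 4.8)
The plan is to realize $\tilde{\sigma}_{\psi_b}$ as a linearization of the projective representation of $\Sp$ that is forced by the uniqueness clause of Proposition~\ref{Cliff-prop}. First I would fix the Schr\"odinger representation $\sigma:=\sigma_{\psi_b}$ on its space $X$, and for $g\in\Sp$ define $\sigma^g$ by $\sigma^g(h):=\sigma(g\cdot h)$. Since the $\Sp$-action fixes the center of $H(V)$ pointwise, $\sigma^g$ is again irreducible with central character $\psi_b$, and by the $\Sp$-invariance asserted in Proposition~\ref{Cliff-prop} we have $\sigma^g\cong\sigma$. Schur's lemma then yields an operator $W(g)\in\GL(X)$, unique up to a nonzero scalar, satisfying $W(g)\sigma(h)=\sigma(g\cdot h)W(g)$ for all $h\in H(V)$; that is, each $W(g)$ satisfies the Weil relation~\eqref{weil}.

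Next I would record that $g\mapsto W(g)$ is a projective representation. Both $W(g_1)W(g_2)$ and $W(g_1g_2)$ intertwine $\sigma$ with $\sigma^{g_1g_2}$, so they differ by a scalar $c(g_1,g_2)\in\CC^\times$, and associativity shows $c\in Z^2(\Sp,\CC^\times)$. Rescaling each $W(g)$ so that $\det W(g)=1$ (possible since $X$ is finite-dimensional) forces $c$ to take values in the group $\mu_{\dim X}$ of roots of unity, so the obstruction to linearizing $W$ is a class $[c]\in H^2(\Sp,\mu_{\dim X})$. A genuine Weil representation of type $\psi_b$ exists precisely when $[c]$ vanishes, since then the normalized operators define a homomorphism $\tilde{\sigma}_{\psi_b}:\Sp\to\GL(X)$ already satisfying~\eqref{weil}.

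The crux, and the step I expect to be the main obstacle, is the vanishing of $[c]$, i.e.\ the splitting of the metaplectic cover of $\Sp(V)$ over the finite local ring $R=\OO{n}$. I would establish this by evaluating $W$ on a generating set of $\Sp(V)$: the Levi elements $\mathrm{diag}(a,(a^T)^{-1})$ with $a\in\GL_k(R)$, the unipotent elements $\left(\begin{smallmatrix}1&b\\0&1\end{smallmatrix}\right)$ with $b$ symmetric, and the Weyl element. On the Schr\"odinger model $\CC[R^k]$ these act, respectively, by $f(\x)\mapsto f(a^{-1}\x)$, by multiplication by the quadratic character $\x\mapsto\psi_b(\tfrac{1}{2}\,\x b\x^T)$, and by the normalized finite Fourier transform; here the hypothesis $\car(\mathcal{O}/\mathfrak{p})\neq 2$ is used to invert $2$ and, more essentially, to guarantee that the quadratic Gauss sums over $R$ governing the Fourier normalization are well-behaved, so that the assignment respects the defining relations of $\Sp(V)$ on the nose rather than only projectively. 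The required splitting over finite local rings of odd residue characteristic is exactly the content underlying \cite{Cliff}. I would also note that for the application in Theorem~\ref{Unipotent} only the restriction of $\tilde{\sigma}_{\psi_b}$ to the image of $\U_k(R)$ in $\Sp$ is needed, and that image consists of Levi-type elements acting by $f(\x)\mapsto f(g^{-1}\x)$, which is already an honest linear representation; so for that purpose the Gauss-sum subtlety does not arise.
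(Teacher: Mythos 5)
Your proposal is correct in outline, but be aware that the paper's entire proof of this proposition is the single line ``See~\cite[Theorem 3.2]{Cliff}'': existence of the Weil representation is treated as a black box. What you have done is reconstruct the skeleton of the argument inside that citation --- Schur's lemma applied to $\sigma^g\cong\sigma$ (legitimately extracted from the $\Sp$-invariance clause of Proposition~\ref{Cliff-prop}) to produce intertwiners $W(g)$, the resulting $2$-cocycle, determinant normalization to land in $\mu_{\dim X}$, and linearization via explicit operators on generators. Note, however, that at the decisive step --- verifying that the generator assignments respect the relations of $\Sp$ over $R$, i.e.\ the Gauss-sum computations --- you yourself defer to \cite{Cliff}, so your argument is ultimately an annotated version of the same citation rather than an independent proof; that is acceptable here, since the paper offers nothing more. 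Two further points. First, a fixable imprecision: your Schr\"odinger model $\CC[R^k]$ with the stated Levi/unipotent/Fourier formulas is valid only for \emph{primitive} $\psi_b$; if $b$ has level $i>0$ you must first reduce modulo the conductor --- every $g\in\Sp$ preserves $V(\aideal_{\psi_b})$ and hence acts on $V_{\aideal_{\psi_b}}$, and the Weil representation of the symplectic group over $\OO{n-i}$ (where $\psi_b$ induces a primitive character) pulls back along $\Sp\to\mathrm{Sp}(V_{\aideal_{\psi_b}})$; you should say this explicitly, since Proposition~\ref{Cliff-prop} and Lemma~\ref{n-i} make clear that $\dim X=q^{k(n-i)}$, not $q^{kn}$. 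Second, your closing observation is a genuine improvement on the paper: the image of $\U_k(R)$ (and hence of any intermediate group $G$ in Theorem~\ref{Unipotent}) in $\Sp$ consists of Levi-type elements, for which the geometric action $f(\x)\mapsto f(a^{-1}\x)$ is already multiplicative and satisfies~\eqref{weil} by direct computation; so the proof of Theorem~\ref{Unipotent} needs no cocycle splitting at all, only the existence of the Schr\"odinger representation (Proposition~\ref{Cliff-prop}), making it independent of the hard part of \cite{Cliff}.
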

\begin{proof}
See~\cite[Theorem 3.2]{Cliff}.
\end{proof}
We are ready to prove Theorem~\ref{Unipotent}.
\begin{proof}[Proof of Theorem~\ref{Unipotent}] We will prove $\mff(\Hei_{2k+1}(\OO{n}))=\mff(\U_{k+2}(\OO{n}))$. Define
$$
b_{ij}=\omega_i\varpi^j\ ,\ 1\leq i\leq f,\ 0\leq j\leq \xi-1.
$$
Set $b'_{ij}:=b_{ij}+\mathfrak{p}^n$. Let $\sigma_{\psi_{b'_{ij}}}$ be the Schr\"{o}dinger representations associated to the additive characters $\psi_{b'_{ij}}$. Notice that the center of $H(V)$ is $\{(r,0): r\in R\}$ which we identify with $R=\OO{n}$. The proof of Lemma~\ref{cons-faith} shows that $\{\psi_{b'_{ij}}\}$ is a basis for $\widehat{\Omega}_1(\OO{n})$. Thus by Lemma~\ref{central-span-faith},
$$\rho:=\bigoplus_{i,j} \sigma_{\psi_{b'_{ij}}}\ , \ 1\leq i\leq f,\ 0\leq j\leq \xi-1,$$
 is a faithful representation of $H(V)$. Moreover, Lemma~\ref{n-i} shows that $\dim(\rho)=\mff(\Hei_{2k+1}(\OO{n}))$.
For each Schr\"{o}dinger representation $\sigma_{\psi_{b'_{ij}}}$ we have an associated Weil representation. Therefore $\sigma_{\psi_{b'_{ij}}}$ can be extended to a representation of $H(V)\rtimes\Sp$. Thus $\rho$ can be also extended to a representation $\tilde{\rho}$ of $H(V)\rtimes\Sp$. We claim that $\tilde{\rho}|_{\U_{k+2}(R)}$ is a faithful representation. Notice that $\U_{k+2}(R)$ is a $p$-group and $Z(\U_{k+2}(R))=Z(\Hei_{2k+1}(R))$. Hence $\tilde{\rho}|_{\U_{k+2}(R)}$ is faithful on the center of $\U_{k+2}(R)$ and so by Remark~\ref{fact=Remark}, $\tilde{\rho}|_{\U_{k+2}(R)}$ is a faithful representation. Therefore $\mff(\U_{k+2}(R))\leq \mff(\Hei_{2k+1}(R))$. Evidently we also have $\mff(\Hei_{2k+1}(\OO{n}))\leq \mff(\U_{k+2}(\OO{n}))$ which completes the proof. 
\end{proof}
%===============================================
\section{Representations of affine groups}\label{Representation of Linear groups}
In this section we record some elementary results on faithful representations of affine groups.  We first recall the character formula of the induced representation.
\begin{lemma}\label{ind-Char} Let $G$ be a finite group with a subgroup $H$. Suppose $(V, \rho)$ is induced by $(W, \theta)$ and let $\chi_\rho$ and $\chi_\theta$  be the corresponding  characters  of  $G$  and  of  $H$. Let $\mathcal{R}$ be a system of representatives of $G/H$. For each $g\in G$, we have
$$
\chi_\rho(g)=\sum_{\substack{r\in \mathcal{R}\\ r^{-1}gr\in H}}\chi_{\theta}(r^{-1}gr).
$$
\end{lemma}
\begin{proof}
See~\cite[$\S$3.3, Theorem 12]{Serre}.
\end{proof}
We also recall that for $n$ complex numbers $z_1,\dots, z_n\in \CC$ with 
$|z_{j}| \le 1$, $1 \le j \le n$, the equality $z_1+\dots+z_n=n$ forces  all $z_{j}$ to be equal to $1$. From this fact it is easy to prove the following lemma.
\begin{lemma}\label{Ker-rho} Let $G$ be a finite group and let $\rho:G\to \GL(V)$ be a representation with character $\chi$. Then 
$
\ker(\rho)=\{g\in G: \chi(g)=\chi(1)\}.
$
\end{lemma}
Using these lemmas we have:
\begin{lemma}\label{faithfulH} Let $A$ be a finite abelian group, and suppose that the finite group $H$ acts on $A$. Let $\chi$ be a one-dimensional representation of $A$ with the property that for any $0\neq a\in A$, there exists $h'\in H$ such that $\chi(h'a)\neq 1$. Then 
$\rho:=\mathrm{Ind}_A^{A\rtimes H}(\chi)$
is a faithful representation of $A\rtimes H$. In particular, if $A$ is a cyclic group then 
$\mff(A\rtimes H)\leq |H|$.
\end{lemma}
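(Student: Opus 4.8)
The goal is to prove Lemma~\ref{faithfulH}: under the stated hypothesis on $\chi$, the induced representation $\rho = \mathrm{Ind}_A^{A \rtimes H}(\chi)$ is faithful, and when $A$ is cyclic this yields $\mff(A \rtimes H) \le |H|$.

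\bigskip

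The plan is to verify faithfulness by computing the character of $\rho$ and invoking Lemma~\ref{Ker-rho}, which characterizes $\ker(\rho)$ as the set of elements $g$ with $\chi_\rho(g) = \chi_\rho(\1)$. Write $G = A \rtimes H$, so $[G:A] = |H|$ and $\chi_\rho(\1) = \dim \rho = |H|$. A natural system of representatives $\mathcal{R}$ for $G/A$ is $H$ itself (viewed inside $G$). I would apply Lemma~\ref{ind-Char} to express, for $g \in G$,
\[
\chi_\rho(g) = \sum_{\substack{h \in H \\ h^{-1}gh \in A}} \chi(h^{-1}gh).
\]
Since each summand has absolute value at most $1$ (as $\chi$ is one-dimensional) and there are at most $|H|$ terms, the equality $\chi_\rho(g) = |H|$ can hold only if \emph{every} conjugate $h^{-1}gh$ lies in $A$ and $\chi(h^{-1}gh) = 1$ for all $h \in H$, by the elementary fact (recalled just before Lemma~\ref{Ker-rho}) that $n$ unit-modulus numbers summing to $n$ must all equal $1$.

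\bigskip

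First I would argue that $g \in \ker(\rho)$ forces $g \in A$: if $g \notin A$, its image in the quotient $G/A \cong H$ is a nontrivial element $\bar g$, and then for at least one representative $h$ the conjugate $h^{-1}gh$ still projects to the nontrivial $h^{-1}\bar g h$, hence lies outside $A$; this makes the number of nonzero summands strictly less than $|H|$, so $|\chi_\rho(g)| < |H|$ and $g \notin \ker(\rho)$. (Here I use that conjugation by $A$ fixes the image in $H$, so the obstruction is detected purely in the quotient.) Thus $\ker(\rho) \subseteq A$. For $g = a \in A$, the relevant conjugates are $h^{-1}ah$, which all lie in $A$ since $A$ is normal, and faithfulness reduces to the requirement that $\chi(h^{-1}ah) = 1$ for all $h \in H$ imply $a = 0$. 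This is precisely the contrapositive of the hypothesis: if $a \neq 0$ there exists $h' \in H$ with $\chi(h' a) \neq 1$, and since $h^{-1} a h$ ranges over the same $H$-orbit values, some conjugate is not annihilated. Hence $\ker(\rho) \cap A = \{\1\}$, and combined with $\ker(\rho) \subseteq A$ we conclude $\ker(\rho) = \{\1\}$.

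\bigskip

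For the final assertion, when $A$ is cyclic I would note that a generator of $A$ admits a faithful one-dimensional character $\chi$, and faithfulness of $\chi$ on a cyclic group means $\chi(a) \neq 1$ for every $0 \neq a \in A$, so the hypothesis holds trivially with $h' = \1$. Then $\rho = \mathrm{Ind}_A^{A \rtimes H}(\chi)$ is faithful of dimension $[A \rtimes H : A] = |H|$, giving $\mff(A \rtimes H) \le |H|$. The main delicate point is the bookkeeping in the first step: correctly matching representatives to conjugates and confirming that a single conjugate escaping $A$ already strictly reduces the character value below $|H|$. This is where one must be careful that conjugation by $A$ does not move the $H$-component, so the count of terms with $h^{-1}gh \in A$ is governed entirely by the image $\bar g \in H$, and equals $|H|$ exactly when $\bar g = \1$.
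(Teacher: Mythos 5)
Your proof is correct and takes essentially the same route as the paper's: both apply the induced character formula (Lemma~\ref{ind-Char}) with $H$ as a transversal for $G/A$, identify the kernel via Lemma~\ref{Ker-rho} together with the unit-modulus summation fact, and settle the cyclic case by inducing a faithful character of $A$. The only cosmetic difference is that the paper observes $\chi_\rho(g)=0$ outright when the $H$-component of $g$ is nontrivial, whereas you bound $|\chi_\rho(g)|<|H|$; both immediately exclude such $g$ from the kernel.
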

\begin{proof}
Set $G=A\rtimes H$ and identify $A$ and $H$ with their isomorphic copies inside $G$.  
Lemma~\ref{ind-Char} asserts that for any $g=(a,h)\in G$
\begin{equation}
\chi_\rho(g)=\begin{cases} 
\sum\limits_{h'\in H}\chi(h'a) & \textrm{if } \, h=1;\\
0 & \text{otherwise}.
\end{cases}
\end{equation}
For $g=(a,h)\in \ker(\rho)$ we have $\chi_\rho(g)=|H|$, which implies that $h=1$. Assume $a\neq 0$, then by our assumption there exists $h'\in H$ such that $\chi(h'a)\neq 1$ and so $\chi_\rho(g)\neq |H|$. This establishes that $\rho$ is a faithful representation. When $A$ is cyclic, we can choose $\chi$ to be a one-dimensional faithful representation of $A$ which clearly satisfies the assumption of the lemma.
\end{proof} 
\begin{lemma}\label{Lemma-primitive}
Let $C= \langle a \rangle$ be a cyclic group with $p^n$ elements, where $p$ is a prime and $n \ge 1$. Let $H$
be a finite group acting on $C$ by automorphisms and $\rho: C \rtimes H\to \GL_d(\CC)$, be a faithful representation and
$$ \restr{\rho}{C}= \bigoplus_{ \chi \in \Delta} \chi\ ,\ \Delta\subseteq \widehat{C},$$ be the decomposition of $ \restr{\rho}{C}$ to one-dimensional representations of $C$. Then, there exists $ \chi \in \Delta$ 
such that $ \chi(a)$ is a primitive $p^n$th root of unity.
\end{lemma}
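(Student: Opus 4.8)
The plan is to mirror the reduction-to-$\Omega_1$ argument already used in the proof of Theorem~\ref{2-step}, exploiting the fact that $C$ is cyclic so that the condition ``$\chi(a)$ is a primitive $p^n$th root of unity'' is exactly the condition that $\chi$ be a \emph{faithful} character of $C$.

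First I would record that since $C$ is a subgroup of $C\rtimes H$ and $\rho$ is faithful, the restriction $\restr{\rho}{C}$ is itself faithful: indeed $\ker(\restr{\rho}{C})=\ker(\rho)\cap C=\{\1\}$. Because $\restr{\rho}{C}=\bigoplus_{\chi\in\Delta}\chi$, this faithfulness is precisely the statement that $\bigcap_{\chi\in\Delta}\ker(\chi)=\{\1\}$.

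Next I would argue by contradiction, suppose that no $\chi\in\Delta$ has $\chi(a)$ equal to a primitive $p^n$th root of unity. Since $C$ has order $p^n$ and $a$ is a generator, the order of each $\chi\in\Delta$ (as an element of $\widehat{C}$) is then a proper power of $p$, hence divides $p^{n-1}$; consequently $\chi(a^{p^{n-1}})=\chi(a)^{p^{n-1}}=1$ for every $\chi\in\Delta$. Equivalently, the nontrivial element $a^{p^{n-1}}$, which generates $\Omega_1(C)$, the unique subgroup of order $p$ in $C$, lies in $\bigcap_{\chi\in\Delta}\ker(\chi)$. This contradicts the conclusion of the first step, and completes the proof.

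There is no genuine obstacle here: the only input is the elementary observation that a character of a cyclic $p$-group of order $p^n$ fails to be faithful precisely when it kills the generator $a^{p^{n-1}}$ of the socle $\Omega_1(C)$, combined with the fact that faithfulness of $\rho$ forces faithfulness of $\restr{\rho}{C}$. This is the same reduction to $\Omega_1(Z(\cdot))$ that underlies Section~\ref{pg}, specialized to the cyclic normal subgroup $C$.
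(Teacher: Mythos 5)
Your proof is correct and is essentially the paper's argument: the paper likewise notes that faithfulness of $\rho$ gives $\{\1\}=\ker \restr{\rho}{C}=\bigcap_{\chi\in\Delta}\ker\chi$ and then invokes the total ordering of the subgroup lattice of a cyclic $p$-group to produce a $\chi\in\Delta$ with $\ker\chi=\{\1\}$. Your contradiction via the socle $\langle a^{p^{n-1}}\rangle=\Omega_1(C)$ is just the contrapositive phrasing of that same structural fact (every nontrivial subgroup of $C$ contains the unique order-$p$ subgroup), so the two proofs are interchangeable.
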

\begin{proof} Note that $ \{ \1 \}=\ker \rho_{|_C}= \bigcap_{ \chi \in \Delta} \ker \chi.$
Since the lattice of subgroups of the cyclic group of order $p^n$ is totally ordered, there exists $ \chi \in \Delta$ with 
$\ker \chi= \{ \1 \}$. This implies that $ \chi(a)$ is a primitive $p^n$th root of unity. 
\end{proof}
\begin{proof}[ Proof of Proposition~\ref{Aff-prop}] Let $\rho: C\rtimes H\to\GL_d(\CC)$ be a faithful representation and let
$$ \restr{\rho}{C}= \bigoplus_{ \chi \in \Delta} \chi\ ,\ \Delta\subseteq \widehat{C},$$
be the decomposition of $ \restr{\rho}{C}$ into one-dimensional representations of $C$.
Let $h_ia=a^{m_{i}}$, $1\leq i\leq l$, be the $H$-orbit of the generator $a$. By Lemma~\ref{Lemma-primitive}, there exists $\chi\in\Delta$ such that $\zeta= \chi(a)$ is a primitive $p^n$th root of unity. Since $H$ acts on $C$, the set $ \Delta$ is also $H$-invariant. Hence for each $1\leq i\leq l$, we obtain a one-dimensional representation $\chi_{h_i}\in \Delta$ such that 
$$
 \chi_{h_i}(a)=\chi(h_ia)=\chi(a^{m_i})=\zeta^{m_i}.
$$  
These representations are clearly all distinct and hence $\dim(\rho)\geq |\Delta|\geq 
|Ha|$. This establishes the first part of the proposition. If $C$ is a faithful $H$-module, then $|Ha|= |H|$, and so by Lemma~\ref{faithfulH} we have the second part of the proposition.
\end{proof}

Let us remark that the proof of Proposition~\ref{Aff-prop} relies on the fact that $\Z{p^n}$ is a cyclic group, which no longer holds for general $\OO{n}$. We will get around this issue by analyzing the characters of $\OO{n}$. 
\begin{proof}[ Proof of Theorem~\ref{Aff}] Let $\rho: \Aff(\OO{n})\to \GL_d(\CC)$ be a faithful representation and consider the following decomposition
$$
\restr{\rho}{(\OO{n})}=\bigoplus_{\psi_i\in\Delta}\psi_i\ ,\ \Delta\subseteq \widehat{\OO{n}}.
$$
We claim that there exists $i$, such that $\psi_{i}\in\Delta$ is primitive. This is true since the lattice of ideals of $\OO{n}$ is totally ordered and $\rho$ is a faithful representation. This establishes the claim. Denote this primitive character by $\psi$. Notice that $\Delta$ is $(\OO{n})^\times$-invariant. Indeed for each $b\in(\OO{n})^\times$ we obtain a new primitive character $\psi_b\in\Delta$, defined by $\psi_b(x)=\psi(bx)$. Therefore $|\Delta|\geq |(\OO{n})^\times|=q^{n}-q^{n-1}$ which implies that
$$
\mff(\Aff(\OO{n}))\geq q^{n}-q^{n-1}.
$$
We now construct a faithful representation of dimension $q^n-q^{n-1}$. The idea of the construction 
resembles the one used in Lemma~\ref{faithfulH}. Let $\psi$ be a primitive character of $\OO{n}$ and let $0\neq x\in\OO{n}$ be an arbitrary element. We claim that there is a unit $t\in\OO{n}$ such that $\psi(tx)\neq 1$. Since $\psi$ is primitive, there exists $s\in\OO{n}$ such that $\psi(sx)\neq 1$. Suppose that $s$ is not unit. If $\psi(x)=1$, then $t=1+s$ is the desired unit and if $\psi(x)\neq 1$ then we can take $t=1$.  Therefore by Lemma~\ref{faithfulH} we observe that $\mathrm{Ind}_{\OO{n}}^{\Aff(\OO{n})}(\psi)$ is a faithful representation of dimension $q^n-q^{n-1}$.
\end{proof}
%=====================================================
\section*{Acknowledgement} During the completion of this work, M.B. was supported by a postdoctoral fellowship from the University of Ottawa. He wishes to thank
his supervisors Vadim Kaimanovich, Hadi Salmasian, and Kirill Zainoulline. The authors thank  Camelia Karimianpour, Monica Nevins, and Zinovy Reichstein  for useful discussions on the subject of this paper. The authors are also very grateful to the referee for a very careful reading and many helpful suggestions and corrections.
%===================================================
\bibliographystyle{abbrv}
%\bibliography{faithful}

%======================================
\end{document}